\date{}
\def\C{{\mathbb C}}
\def\N{\mathbb{N}}
\def\ra{\rightarrow}
\def\ov{\overline}
\def\lo{\longrightarrow}
\def\a{\alpha}
\def\g{\gamma}
\def\d{\sum}
\def\beq{\begin{eqnarray}}
\def\eeq{\end{eqnarray}}
\def\beqa{\begin{eqnarray*}}
\def\eeqa{\end{eqnarray*}}
\def\wt{\widetilde}
 \newtheorem{thm}{Theorem}[section]
 \newtheorem{lem}[thm]{Lemma}
 \newtheorem{prop}[thm]{Proposition}
 \theoremstyle{definition}
 \newtheorem{rem}[thm]{Remark}
 \newtheorem{ex}[thm]{Example}
 \numberwithin{equation}{section}
\begin{document}
\title[Hilbert modules]{Resolution of singularities for a class of Hilbert modules}

\author[Biswas]{Shibananda Biswas}

\author[Misra]{Gadadhar Misra}
\address[Shibananda Biswas and Gadadhar Misra]{Department of Mathematics, Indian Institute of Science, Banaglore 560012}

\email[S. Biswas]{shibu@math.iisc.ernet.in} \email[G. Misra]{gm@math.iisc.ernet.in}

\thanks{Financial support for the work of S. Biswas was provided in the
form of a Research Fellowship of the Indian Statistical Institute and the
Department of Science and Technology.
The work of G. Misra was  supported in part by UGC - SAP and
by a grant from the Department of Science and Technology.}

\subjclass[2000]{47B32, 46M20, 32A10, 32A36}

\keywords{Hilbert module, reproducing kernel function, Analytic Hilbert module, submodule, holomorphic Hermitian vector bundle, analytic sheaf}

\begin{abstract}
A short proof of the ``Rigidity theorem'' using the sheaf theoretic model for Hilbert modules over polynomial rings is given. The joint kernel for a large class of submodules is described. The completion $[\mathcal I]$ of a homogeneous (polynomial) ideal $\mathcal I$ in a Hilbert module is a submodule for which the joint kernel is shown to be of the form
$$
\{p_i(\tfrac{\partial}{\partial \bar{w}_1}, \ldots , \tfrac{\partial}{\partial \bar{w}_m}) K_{[\mathcal I]}(\cdot,w)|_{w=0}, 1 \leq i \leq n\},
$$
where $K_{[\mathcal I]}$ is the reproducing kernel for the submodule $[\mathcal I]$ and $p_1, \ldots , p_n$ is some minimal ``canonical set of  generators'' for the ideal $\mathcal I$.
The proof includes an algorithm for constructing this canonical set of generators,  which is  determined uniquely modulo linear relations, for homogeneous ideals.  A set of easily computable invariants for these
submodules, using the monoidal transformation, are provided.
Several examples are given to illustrate the explicit computation of these invariants.
\end{abstract}

\maketitle

\section{Preliminaries}\label{intro}
Beurling's theorem describing the invariant subspaces of the multiplication (by the coordinate function) operator on the Hardy space of the unit disc is essential to the Sz.-Nagy -- Foias model theory and several other developments in modern operator theory.  In the language of Hilbert modules,  Beurling's theorem says that all submodules of the Hardy module of the unit disc are equivalent.  This observation, due to Cowen and Douglas \cite{cd2}, is peculiar to the case of one-variable operator theory. The submodule of functions vanishing at the origin of the Hardy module $H_0^2(\mathbb D^2)$ of the bi-disc is not equivalent to the Hardy module $H^2(\mathbb D^2)$.
To see this, it is enough to note that the joint kernel of the adjoint of the multiplication by the two co-ordinate functions on the Hardy module of the bi-disc  is $1$ - dimensional (it is spanned by the constant function $1$) while the joint kernel of these operators restricted to the submodule is $2$ - dimensional (it is spanned by the two functions $z_1$ and $z_2$).

There has been a systematic study of this phenomenon in the recent past \cite{as,dpsy} resulting in a number of ``Rigidity theorems'' for submodules of a Hilbert module $\mathcal M$ over the polynomial ring $\C[\underline z]:=\C[z_1,\ldots,z_m]$ of the form $[\mathcal I]$ obtained by taking the norm closure of a polynomial ideal $\mathcal I$ in the Hilbert module. For a large class of polynomial ideals, these theorems often take the form: two submodules $[\mathcal I]$ and $[\mathcal J]$ in some Hilbert module $\mathcal M$ are equivalent if and only if the two ideals $\mathcal I$ and $\mathcal J$ are equal. We give a short proof of this
theorem  using the sheaf theoretic model developed earlier in \cite{bmp} and construct tractable invariants for Hilbert modules over $\C[\underline z]$.




Let $\mathcal M$ be a Hilbert module of holomorphic functions on a bounded open connected subset $\Omega$ of $\mathbb C^m$ possessing a reproducing kernel $K$.  Assume that $\mathcal I \subseteq \mathbb C[\underline{z}]$ is the singly generated ideal $\langle p \rangle$.
Then the reproducing kernel $K_{[\mathcal I]}$ of $[\mathcal I]$  vanishes on the zero set $V(\mathcal I)$
and the map $w \mapsto K_{[\mathcal I]}(\cdot , w)$  defines a holomorphic Hermitian line bundle on the open set $\Omega_\mathcal I^*=\{w\in \mathbb C^m: \bar{w} \in \Omega\setminus V(\mathcal I)\}$
which naturally extends to all of $\Omega^*$.
As is well known, the curvature of this line bundle completely determines the equivalence class of the Hilbert module $[\mathcal I]$ (cf. \cite{cd, cd1}). However, if $\mathcal I\subseteq \mathbb C[\underline{z}]$ is not a principal ideal, then the corresponding line bundle defined on $\Omega_\mathcal I^*$  no longer extends to all of $\Omega^*$.    Indeed, it was conjectured in \cite{dmv} that  the  dimension of the joint kernel of the Hilbert module $[\mathcal I]$ at $w$ is $1$ for points $w$ not in $V(\mathcal I)$,
otherwise it is the codimension of $V(\mathcal I)$. Assuming that
\begin{enumerate}
\item[(a)] $\mathcal I$ is a principal ideal or
\item[(b)] $w$ is a smooth point of $V(\mathcal I)$.
\end{enumerate}
Duan and Guo verify the validity of this conjecture in \cite{dg}.
Furthermore if $m=2$ and $\mathcal I$ is prime then
the conjecture is valid.

Thus for any submodule $[\mathcal I]$ in a Hilbert module
$\mathcal M$, assuming that $\mathcal M$ is in the Cowen-Douglas class $\mathrm B_1(\Omega^*)$ and the co-dimension of  $V(\mathcal I) $ is greater than $1$, it follows that $[\mathcal I]$ is in $\mathrm B_1(\Omega_\mathcal I^*)$ but it doesn't belong to $\mathrm B_1(\Omega^*)$.  For example, $H_0^2(\mathbb D^2)$ is in the Cowen-Douglas class
$\mathrm B_1(\mathbb D^2\setminus \{(0,0)\})$ but it does not belong to $\mathrm B_1(\mathbb D^2)$.  To systematically study examples of submodules like $H^2_0(\mathbb D^2)$, the following definition from \cite{bmp} will be useful.

\noindent {\sf Definition}.
A Hilbert module $\mathcal M$ over the
polynomial ring in $\C[\underline z]$  is said to be in the class
$\mathfrak{B}_1(\Omega^*)$ if
\begin{enumerate}
\item[\sf{(rk)}] possess a reproducing kernel $K$
(we don't rule out the possibility: $K(w,w)=0$ for  $w$ in
some closed subset $X$ of $\Omega$) and \item[\sf{(fin)}] The
dimension of $\mathcal M/\mathfrak m_w\mathcal M$ is finite for all
$w\in \Omega$.
\end{enumerate}

For Hilbert modules in $\mathfrak{B}_1(\Omega)$, from \cite{bmp}, we have:

\noindent{\sf Lemma}.
Suppose $\mathcal M \in \mathfrak B_1(\Omega^*)$ is the closure of a
polynomial ideal $\mathcal I$.  Then $\mathcal M$ is in
$\mathrm{B}_1(\Omega^*)$ if the ideal  $\mathcal I$ is singly
generated while if it is minimally generated by more than one polynomial, then $\mathcal M$ is in $\mathrm{B}_1(\Omega_{\mathcal I}^*)$.

This Lemma ensures that to a Hilbert module in
$\mathfrak B_1(\Omega^*)$, there corresponds a holomorphic Hermitian line
bundle defined by the joint kernel for points in $\Omega_\mathcal I^*$. We will show that it extends to a holomorphic Hermitian line bundle on the ``blow-up'' space $\hat{\Omega}^*$ via the monoidal transform under mild hypothesis on the zero set $V(\mathcal I)$.  We also show that this line bundle determines the equivalence class of the module $[\mathcal I]$ and therefore its curvature is a complete invariant. However, computing it explicitly on all of $\hat{\Omega}^*$ is difficult.
In this paper we find invariants, not necessarily complete, which are  easy to compute. One of these invariants is nothing but the curvature of the restriction of the line bundle on $\hat\Omega^*$ to the exceptional subset of $\hat{\Omega}^*$.

A line bundle is completely determined by its sections on open subsets. To write down the sections, we use the decomposition theorem for the reproducing kernel \cite[Theorem 1.5]{bmp}. The actual computation of the curvature invariant require the explicit calculation of norm of these sections.  Thus it is essential to obtain explicit description of the eigenvectors  $K^{(i)},\,1\leq i\leq d,$ in terms of the reproducing kernel. We give two examples which, we hope, will motivate the results that follow.
Let $H^2(\mathbb D^2)$ be the Hardy module over the bi-disc algebra.
The reproducing kernel for $H^2(\mathbb D^2)$ is the S\"{z}ego kernel $\mathbb S(z,w) = \frac{1}{1-z_1\bar{w}_2}\frac{1}{1-z_2\bar{w}_2}$.
Let $\mathcal I_{0}$ be the polynomial ideal
$\langle z_1, z_2\rangle$ and let $[\mathcal I_{0}]$ denote the minimal closed
submodule of the Hardy module $H^2(\mathbb D^2)$ containing $\mathcal I_{0}$.
Then the joint kernel of
the adjoint of the multiplication operators $M_1$ and $M_2$ is spanned by the two linearly independent vectors: $z_1=p_1(\bar{\partial}_1,\bar{\partial}_2)  \mathbb S(z,w)_{|w_1=0=w_2}$ and  $z_2=p_2(\bar{\partial}_1,\bar{\partial}_2 ) \mathbb S(z,w)_{|w_1=0=w_2}$, where $p_1,p_2$ are the generators of the ideal $\mathcal I_{0}$. For a second example, take the ideal $\mathcal I_{1}=\langle z_1-z_2, z_2^2\rangle$ and let  $[\mathcal I_{1}]$ be the minimal closed
submodule of the Hardy module $H^2_0(\mathbb D^2)$ containing $\mathcal I_{1}$.  The joint kernel is not hard to compute. A set of two linearly independent vectors which span it are
$p_1(\bar{\partial}_1,\bar{\partial}_2)  \mathbb S(z,w)_{|w_1=0=w_2}$ and $p_2(\bar{\partial}_1,\bar{\partial}_2 )  \mathbb S(z,w)_{|w_1=0=w_2}$, where $p_1 = z_1 - z_2$ and $p_2 = (z_1 + z_2)^2$.
Unlike the first example, the two polynomials $p_1, p_2$ are not the generators for the ideal $\mathcal I_1$ that were given at the start, never the less, they are easily seen to be a set of generators for the ideal $\mathcal I_{1}$ as well.  This prompts the question:

\noindent\textsf{Question}: Let $\mathcal M\in\mathfrak B_1(\Omega^*)$ be a Hilbert module and $\mathcal I\subseteq\mathcal M$ be a polynomial ideal. Assume without loss of generality that $0\in V(\mathcal I)$. We ask
\begin{enumerate}
\item \label{q1}if there exists a set of polynomials $p_1, \ldots ,p_n$ such that 
$$p_i(\tfrac{{\partial}}{\partial \bar{w}_1}, \ldots , \tfrac{{\partial}}{\partial \bar{w}_m} )K_{[\mathcal I]}(z,w)_{|z=0=w},\,i=1,\ldots ,n,
$$
spans the joint kernel of $[\mathcal I]$;
\item what conditions, if any, will ensure that the polynomials  $p_1, \ldots , p_n$, as above, is a generating set for $\mathcal I$?
\end{enumerate}
We show that the answer to the Question $(1)$ is affirmative, that is, there is a natural basis for the joint eigenspace of the Hilbert module $[\mathcal I]$,  which is  obtained by applying a differential operator to the reproducing kernel $K_{[\mathcal I]}$ of the Hilbert module $[\mathcal I]$.  Often, these differential operators encode an algorithm for producing a set of generators for the ideal $\mathcal I$ with additional properties. It is shown that there is an  affirmative answer to the Question $(2)$ as well, if the ideal is assumed to be homogeneous. It then follows that, if there were two sets of generators which serve to describe the joint kernel, as above, then these generators must be linear combinations of each other, that is, the sets of generators  are determined modulo a linear transformation.  We will call them \emph{canonical set of generators}. The canonical generators provide an effective tool to determine if two ideal are equal.  A number of examples illustrating this phenomenon is given.

In the following section, we describe the joint kernel. In section \ref{res}, we construct the holomorphic Hermitian line bundle on the ``blow - up '' space. In the last section, we provide an explicit calculation.

\subsection{\rm }{\sf Index of notations:}\vskip .75em

\begin{tabular}{ll}
$\C[\underline z]$ & the polynomial ring $\C[z_1,\ldots,z_m]$ of $m$- complex variables  \\

$\mathfrak m_w$ & maximal ideal of $\C[\underline z]$ at the point $w\in\C^m$\\

$\Omega$ & a bounded domain in $\C^m$ \\

$\Omega^*$ & $\{\bar z: z\in\Omega\}$ \\

$\mathbb D$ & the open unit disc in $\C$\\

$\mathbb D^m$ & the poly-disc $\{z\in\C^m:|z_i|<1,\, 1\leq i\leq t\},m\geq 1$\\

$[\mathcal I]$ & the completion of a polynomial ideal $\mathcal I$ in some Hilbert module\\

$M_i$ & module multiplication by the co-ordinate function $z_i$ on $[\mathcal I]$, $1\leq i\leq m$\\

$M_i^*$ & adjoint of the multiplication operator $M_i$ on $[\mathcal I]$, $1\leq i\leq m$\\

$K_{[\mathcal I]}$ & the reproducing kernel of $[\mathcal I]$\\

$\alpha, |\alpha|, \alpha !$ & the multi index $(\alpha_1,\ldots,\alpha_m)$, $|\alpha|=\sum_{i=1}^m\alpha_i$ and $\alpha ! = \alpha_1!\ldots\alpha_m!$\\

$\binom{\alpha}{k}$ &  $= \prod_{i=1}^m\binom{\alpha_i}{k_i}$ for $\alpha = (\alpha_1,\ldots,\alpha_m)$ and  $k=(k_1,\ldots,k_m)$\\

$k\leq \alpha$ & if $k_i\leq\alpha_i$,  $1\leq i\leq m$.\\

$z^\alpha$ & $z_1^{\alpha_1}\ldots z_m^{\alpha_m}$\\

$\partial^{\alpha},\bar\partial^{\alpha}$ & $\partial^{\alpha}=\frac{\partial^{|\alpha|}}{\partial z_1^{\alpha_1}\cdots z_m^{\alpha_m}},\bar{\partial}^{\alpha}=\frac{\partial^{|\alpha|}}{\partial\bar{z}_1^{\alpha_1}\cdots \bar{z}_m^{\alpha_m}}$ for $\alpha \in
{\mathbb Z^+\times\cdots\times\mathbb Z^+}$\\
$q(D)$ & the differential operator $q(\tfrac{\partial}{\partial z_1}, \ldots , \tfrac{\partial}{\partial z_m})$  ( = $\sum_{\alpha}a_{\alpha}\partial^{\alpha}$, where $q = \sum_{\alpha}a_{\alpha}z^{\alpha}$) \\


$ B_n(\Omega)$ & Cowen-Douglas  class of operators of rank $n$, $n\geq 1$\\

$q^*$ & $q^*(z) = \ov{q(\bar z)} (= \sum_{\alpha}\bar a_{\alpha}z^{\alpha}$ for  $q$ of the form   $\sum_{\alpha}a_{\alpha}z^{\alpha})$ \\

$\langle~ ,~ \rangle_{w_0}$ & the Fock inner product at $w_0$, defined by $\langle p, q\rangle_{w_0}:= q^*(D)p|_{w_0}
= (q^*(D)p)(w_0)$\\

$\mathcal S^{\mathcal M}$ & the analytic subsheaf of $\mathcal O_\Omega$, corresponding to the Hilbert module $\mathcal M \in \mathfrak{B}_1(\Omega^*)$ \\

$\mathbb V_w(\mathcal F)$ & the characteristic space at $w$, which is $\{q\in\C[\underline z]: q(D)f\big{|}_w=0 \mbox{~for ~all ~}f\in\mathcal F\}$\\
& for some set  $\mathcal F$ of holomorphic functions\\

\end{tabular}

\section{Calculation of basis vectors for the joint kernel}
The Fock inner product of a pair of polynomials $p$ and $q$ is defined by the rule:
$$\langle p, q\rangle_0=q^*(\tfrac{\partial}{\partial z_1}, \ldots , \tfrac{\partial}{\partial z_m})\,p|_0,\,\, q^*(z) = \overline{q(\bar{z})}.$$
The map $\langle ~,~ \rangle_0:\C[\underline{z}]\times \C[\underline{z}]\lo\C$ is linear in first variable and
conjugate linear in the second and for $p=\d_{\alpha} a_{\alpha}
z^{\alpha},~q=\d_{\alpha} b_{\alpha}z^{\alpha}$ in $\C[\underline{z}]$,  we have
$$
\langle p, q\rangle_0 = \d_{\alpha} \alpha ! a_{\alpha}\bar b_{\alpha}
$$
since $z^{\alpha}(D)z^{\beta}|_{z=0}= \alpha!$ if
$\alpha=\beta$ and $0$ otherwise.  Also,  $\langle p, p\rangle_0 = \d_{\alpha} \alpha ! |a_{\alpha}|^2\geq 0$ and equals $0$ only when 
$a_{\alpha}=0$ for all $\alpha$.  The completion of the polynomial ring with this inner product is the well known Fock space $L^2_a(\C^m, d\mu)$, that is, the space of all $\mu$-square integrable entire functions on $\C^m$, where
$$
d\mu(z)= {\pi}^{-m}e^{-|z|^2}d\nu(z)
$$
is the Gaussian measure on $\C^m$ ($d\nu$ is the usual Lebesgue measure).

The characteristic space (cf. \cite[page 11]{cg}) of an ideal $\mathcal I$ in $\C[\underline{z}]$ at the point $w$ is the vector space
\begin{eqnarray*}
\mathbb V_w(\mathcal I):= \{q\in\C[\underline{z}] : q(D)p|_w=0,\, p\in\mathcal I\}
&=&\{q\in\C[\underline{z}]: \langle p, q^*\rangle_w = 0,\, p \in \mathcal I\}.
\end{eqnarray*}
The envelope of the ideal $\mathcal I$ at the point $w$ is defined to be the ideal
\beqa
\mathcal I^e_w &:=& \{p\in\C[\underline{z}] : q(D)p|_w=0,\,q\in\mathbb V_w(\mathcal I)\}\\ &~=& \{p\in\C[\underline{z}] : \langle p, q^*\rangle_w = 0,\,q \in \mathbb V_w(\mathcal I)\}.
\eeqa
It is known \cite[Theorem 2.1.1, page 13]{cg} that $\mathcal I= \cap_{w\in V(\mathcal I)}\mathcal I_w^e$. The proof makes essential use of the well known Krull's intersection theorem. In particular, if $V(\mathcal I) = \{w\}$, then $\mathcal I_w^e = \mathcal I$.  It is easy to verify this special case using the Fock inner product.   We provide the details below after setting $w=0$, without loss of generality.

Let $\mathfrak m_0$ be the maximal
ideal in $\C[\underline z]$ at $0$.
By Hilbert's Nullstellensatz, there exists
a positive integer $N$ such that $\mathfrak m_0^N\subseteq\mathcal I$. We identify $\C[\underline{z}]/\mathfrak m_0^N$  with
$\mbox{span}_{\C}\{z^\alpha:|\alpha|<N\}$ which is the same as $(\mathfrak{m}_0^N)^{\perp}$ in the Fock inner product. 
Let $\mathcal I_N$ be the vector space $\mathcal I\cap \mbox{span}_\C \{z^\alpha:|\alpha|<N\}$. Clearly $\mathcal I$ is the vector space (orthogonal) direct sum $\mathcal I_N\oplus\mathfrak m_0^N$.  Let
\begin{eqnarray*}
\tilde V = \{q \in \C[\underline{z}]: \mbox{deg}\,\, q < N \,\mbox{~and~} \langle p,q \rangle_0 =0,\,p\in \mathcal I_N\}=\big (\mathfrak m_0^N\big )^{\perp} \ominus \mathcal I_N. \end{eqnarray*}
Evidently, $\mathbb{V}_0(\mathcal I) = \tilde V^*$, where $\tilde V^*=\{q\in V:q^*\in\tilde V\}$. It is therefore clear that the definition of $\tilde V$ is independent of $N$, that is, if $\mathfrak m^{N_1} \subset\mathcal I$ for some $N_1$, then $(\mathfrak m_0^{N_1} )^{\perp} \ominus \mathcal I_{N_1} = (\mathfrak m_0^N )^{\perp} \ominus \mathcal I_N$. Thus
\begin{eqnarray*}
\mathcal I^e_0 &=&
\{p\in\C[\underline{z}] : \mbox{deg}\,\, p < N\mbox{~and~}\langle p, q^*\rangle_0=0,\, q\in\mathbb V_0(\mathcal I)\} \oplus \mathfrak m_0^N\\
&=&\big ( (\mathfrak m_0^N)^\perp \ominus \tilde{V}\big )\oplus\mathfrak m_0^N\\
&=& \mathcal I_N \oplus \mathfrak m_0^N
\end{eqnarray*}
showing that $\mathcal I^e_0 = \mathcal I$.

Let $\mathcal M$ be a submodule of an analytic Hilbert module $\mathcal H$ on $\Omega$ such that $\mathcal M = [\mathcal I]$, closure of the ideal $\mathcal I$ in $\mathcal H.$ It is known that $\mathbb V_0(\mathcal I)= \mathbb V_0(\mathcal M)$ (cf. \cite{bmp, dpsy}). Since 
$$
\mathcal M\subseteq\mathcal M_0^e:=\{f\in\mathcal H: q(D)f|_0=0 \mbox{~for~all~}q\in\mathbb V_0(\mathcal M)\},
$$ 
it follows that 
\beqa
\mbox{dim}\mathcal H/\mathcal M_0^e\leq\mbox{dim}\mathcal
H/\mathcal M = \dim \C[\underline{z}]/\mathcal I &\leq& \dim \C[\underline{z}]/\mathfrak m_0^N\\ &\leq& \sum_{k=0}^{N-1}\binom{k+m-1}{m-1} <+\infty.
\eeqa
Therefore,  from \cite{dpsy}, we have $\mathcal M_0^e\cap\C[\underline z]=\mathcal I_0^e$ and $\mathcal M\cap\C[\underline z]=\mathcal I$,  and hence \beq \label{obs1} \mathcal
M_0^e=[\mathcal I_0^e]=[\mathcal I]=\mathcal M. \eeq

\noindent\textsf{Assumption}: Let $\mathcal I \subseteq \C[\underline z]$ be an ideal. We assume that the module  $\mathcal M$ in $\mathfrak B_1(\Omega)$ is the completion of $\mathcal I$ with respect to some inner product. For notational convenience, in the following discussion, we let $K$ be the reproducing kernel of $\mathcal M = [\mathcal I]$, instead of $K_{[\mathcal I]}$.

To describe the joint kernel $\cap_{j=1}^m\ker (M_j -w_j )^*$ using the characteristic space $\mathbb V_w(\mathcal I)$, it will be useful to define the auxialliary space 
$$
\tilde{\mathbb V}_{w}(\mathcal I)~=~\{q\in\C[\underline z]: \frac{\partial q}{\partial z_i}\in
\mathbb V_{w}(\mathcal I),\: 1\leq i\leq m\}.
$$ 
From \cite[Lemma 3.4]{bmp}, it follows that $V(\mathfrak m_w\mathcal I)\setminus V(\mathcal I) =\{w\}$ and $\mathbb V_w(\mathfrak m_w\mathcal I) = \tilde{\mathbb V}_{w}(\mathcal I)$.  Therefore,
\beq  \label{crucial}
\dim \cap_{j=1}^m\ker (M_j -w_j )^*&=& \dim \mathcal M/\mathfrak m_w\mathcal M   ~=~\dim \mathcal I/\mathfrak m_w\mathcal I \\ &=&\sum_{\lambda\in V(\mathfrak m_w\mathcal I)\setminus V(\mathcal I)} \dim \mathbb V_\lambda(\mathfrak m_w\mathcal I)/\mathbb V_\lambda(\mathcal I) \nonumber \\ &=& \dim \tilde{\mathbb V}_{w}(\mathcal I)/\mathbb V_w(\mathcal I)\nonumber.
\eeq
For the second and the third equalities, see \cite[Theorem 2.2.5 and 2.1.7]{cg}. Since $\tilde{\mathbb V}_{w}(\mathcal I)$ is a subspace of the inner product space $\C[\underline{z}]$, we will often identify the quotient space $\tilde{\mathbb V}_{w}(\mathcal I)/\mathbb V_w(\mathcal I)$ with the subspace of $\tilde{\mathbb V}_{w}(\mathcal I)$ which is the orthogonal complement of $\mathbb V_w(\mathcal I)$ in $\tilde{\mathbb V}_{w}(\mathcal I)$. Equation \eqref{crucial} motivates following lemma describing the basis of the joint kernel of the adjoint of the multiplication operator at a point in $\Omega$. This answers the question (\ref{q1}) of the introduction.

\begin{lem}\label{nice}
Fix $w_0\in\Omega$ and polynomials $q_1,\ldots,q_t$. Let $\mathcal I$ be a polynomial ideal and $K$ be the reproducing kernel corresponding the Hilbert module $[\mathcal I]$, which is assumed to be in $\mathfrak B_1(\Omega^*)$. Then the vectors 
$$
q^*_1(\bar D)K(\cdot, w)|_{w=w_0},\ldots, q^*_t(\bar D)K(\cdot, w)|_{w=w_0}
$$ 
form a basis of the joint kernel at $w_0$ of the adjoint of the multiplication operator
if and only if the classes $[q_1],\ldots,[q_t]$ form a basis of $\tilde{\mathbb V}_{w_0}(\mathcal I)/\mathbb V_{w_0}(\mathcal I)$.
\end{lem}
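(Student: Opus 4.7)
The plan is to exhibit an explicit conjugate-linear bijection
$$[q] \;\longmapsto\; q^*(\bar D)K(\cdot,w)|_{w=w_0}$$
from the quotient $\tilde{\mathbb V}_{w_0}(\mathcal I)/\mathbb V_{w_0}(\mathcal I)$ onto the joint kernel $\bigcap_{j=1}^m\ker(M_j-w_{0,j})^*$. Once this is in hand, the equivalence of the two basis statements in the lemma is immediate, since linear independence and spanning are preserved under a conjugate-linear isomorphism.

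The first step is the reproducing-kernel identity
$$\langle f,\, q^*(\bar D)K(\cdot,w)|_{w=w_0}\rangle \;=\; (q(D)f)(w_0),\qquad f\in\mathcal M,$$
which follows by expanding $q^*=\sum_\alpha \bar c_\alpha z^\alpha$ and differentiating $\langle f,K(\cdot,w)\rangle = f(w)$ in $w$ to get $\langle f,\bar\partial_w^\alpha K(\cdot,w)|_{w=w_0}\rangle = \partial^\alpha f(w_0)$. Next, identify the joint kernel with $\mathcal M\ominus \mathfrak m_{w_0}\mathcal M$: a vector lies in $\bigcap_j\ker(M_j-w_{0,j})^*$ iff it is orthogonal to each $(M_j-w_{0,j})\mathcal M$, hence iff it is orthogonal to $\mathfrak m_{w_0}\mathcal M$. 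Combined with the displayed identity, $q^*(\bar D)K(\cdot,w)|_{w=w_0}$ lies in the joint kernel precisely when $(q(D)(ph))(w_0)=0$ for every $p\in\mathfrak m_{w_0}$ and $h\in\mathcal M$, i.e.\ when $q\in\mathbb V_{w_0}(\mathfrak m_{w_0}\mathcal M)$. Using $\mathbb V_{w_0}(\mathcal M)=\mathbb V_{w_0}(\mathcal I)$ and the identification $\mathbb V_{w_0}(\mathfrak m_{w_0}\mathcal I)=\tilde{\mathbb V}_{w_0}(\mathcal I)$ already invoked in deriving \eqref{crucial}, this amounts to $q\in\tilde{\mathbb V}_{w_0}(\mathcal I)$. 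The same identity shows $q^*(\bar D)K(\cdot,w)|_{w=w_0}=0$ iff $(q(D)f)(w_0)=0$ for all $f\in\mathcal M$, iff $q\in\mathbb V_{w_0}(\mathcal I)$.

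Thus $q\mapsto q^*(\bar D)K(\cdot,w)|_{w=w_0}$ is a conjugate-linear map from $\tilde{\mathbb V}_{w_0}(\mathcal I)$ into the joint kernel with kernel exactly $\mathbb V_{w_0}(\mathcal I)$, so it descends to a conjugate-linear injection of the quotient. The dimension equality \eqref{crucial} then promotes this injection to a bijection, and the lemma follows. There is no serious obstacle here; the argument is essentially bookkeeping built on \eqref{crucial} together with the relation $\mathbb V_{w_0}(\mathfrak m_{w_0}\mathcal I)=\tilde{\mathbb V}_{w_0}(\mathcal I)$, the only mildly delicate point being to keep the conjugate-linearity straight throughout.
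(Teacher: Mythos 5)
Your proof is correct and follows essentially the same route as the paper's: the same reproducing-kernel identity $\langle f, q^*(\bar D)K(\cdot,w)|_{w_0}\rangle = (q(D)f)(w_0)$ is the engine, and the dimension count from \eqref{crucial} finishes the argument. The only cosmetic difference is that the paper carries out the Leibniz computation $\langle f, M_j^* q^*(\bar D)K(\cdot,w)|_{w_0}\rangle = \tfrac{\partial q}{\partial z_j}(D)f|_{w_0}$ explicitly (its Claim 2), whereas you route the same step through orthogonality to $\mathfrak m_{w_0}\mathcal M$ together with the already-established identity $\mathbb V_{w_0}(\mathfrak m_{w_0}\mathcal I)=\tilde{\mathbb V}_{w_0}(\mathcal I)$.
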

\begin{proof} Without loss of generality we assume $0\in\Omega$ and $w_0=0$.

\textsf{Claim 1}: For any $q\in \C[\underline z]$, the vector $q^*(\bar D)K(\cdot,w)|_{w=0}\neq 0$ if and only if
$q\notin\mathbb V_0(\mathcal I)$.

Using the reproducing property $f(w) = \langle f, K(\cdot,w)\rangle$  of the kernel $K$,
it is easy to see (cf. \cite{cs}) that
$$\partial^{\alpha}f(w) = \langle f,
\bar\partial^{\alpha} K(\cdot,w)\rangle, \mbox{~for~}  \alpha\in\mathbb Z_m^+, ~w\in\Omega,
 ~f\in\mathcal M.
$$
and thus
\begin{eqnarray*}
\partial^{\alpha}f(w)|_{w=0}&=&\langle f, \bar\partial^{\alpha}
K(\cdot,w)\rangle|_{w=0}~=~\langle f, \bar\partial^{\alpha} \{\d_{\beta}
\frac{{\partial}^{\beta}K(z,0)}{{\beta}!}{\bar w}^{\beta}\}\rangle|_{w=0}\\ &=& \langle f,
\{\d_{\beta\geq\alpha} \frac{{\partial}^{\beta}K(z,0){\alpha}!}{{\beta}!}{\bar
w}^{\beta-\alpha}\}\rangle|_{w=0}~=~\{\d_{\beta\geq\alpha} \langle f,
\frac{{\partial}^{\beta}K(z,0){\alpha}!}{{\beta}!}\rangle{\bar w}^{\beta-\alpha}\}|_{w=0}\\ &=& \langle f,
\bar\partial^{\alpha} K(\cdot,w)|_{w=0}\rangle.
\end{eqnarray*}
So for $f\in\mathcal M$ and a polynomial $q=\d a_{\alpha}z^{\alpha}$, we have
\begin{eqnarray}\label{gendiff}
\langle f, q^*(\bar D)K(\cdot, w)|_{w=0}\rangle &=& \langle q,
\d_{\alpha} {\bar a}_{\alpha}\bar\partial^{\alpha}K(\cdot, w)\rangle|_{w=0}= \d_{\alpha}  a_{\alpha}\langle f,
\bar\partial^{\alpha}K(\cdot, w)\rangle|_{w=0}  \\
&=& \{\d_{\alpha}  a_{\alpha}\partial^{\alpha}\langle f,
K(\cdot, w)\rangle\}|_{w=0} =q(D)f|_{w=0}.\nonumber
\end{eqnarray}
This proves the claim.

\textsf{Claim 2}: For any $q\in\C[\underline z]$, the vector $q^*(\bar D)K(\cdot,w)|_{w=0}\in\cap_{j=1}^m\ker M_j ^*$ if and only if $q\in\tilde{\mathbb V}_{0}(\mathcal I)$.

For any $f\in\mathcal M$, we have
\beqa
\langle f, M_j^*q^*(\bar D)K(\cdot,w)|_{w=0}\rangle &=& \langle M_jf, q^*(\bar D)K(\cdot,w)|_{w=0}\rangle = q(D)(z_jf)|_{w=0} \\ &=& \{z_jq(D)f + \frac{\partial q}{\partial z_j}(D)f\}|_{w=0} = \frac{\partial q}{\partial z_j}(D)f|_{w=0}
\eeqa
verifying the claim.  

As a consequence of claims 1 and 2, we see that  $q^*(\bar D)K(\cdot,w)|_{w=0}$ is a non-zero vector in the joint kernel if and only if the class $[q]$ in $\tilde{\mathbb V}_{0}(\mathcal I)/\mathbb V_0(\mathcal I)$ is non-zero.

Pick polynomials $q_1,\ldots,q_t$. From the equation \eqref{crucial} and claim 2, it is enough to show that  $q^*_1(\bar D)K(\cdot, w)|_{w=0},\ldots, q^*_t(\bar D)K(\cdot, w)|_{w=0}$ are linearly independent if and only if $[q_1],\ldots,[q_t]$ are linearly independent in $\tilde{\mathbb V}_{0}(\mathcal I)/\mathbb V_{0}(\mathcal I)$.  But from claim 1 and equation \eqref{gendiff}, it follows that 
$$
\sum_{i=1}^t\bar\alpha_iq^*_i(\bar D)K(\cdot,w)|_{w=0} = 0\mbox{~if ~and ~only ~if~} \sum_{i=1}^t\alpha_i[q_i] =0 \mbox{~in~} \tilde{\mathbb V}_{0}(\mathcal I)/\mathbb V_{0}(\mathcal I)
$$ 
for scalars $\a_i\in\C,\,1\leq i\leq t$. This completes the proof.
\end{proof}


\begin{rem}
The `if' part of the theorem can also be obtained from the decomposition theorem \cite[Theorem 1.5]{bmp}. For module $\mathcal M$ in the class $\mathfrak B_1(\Omega^*)$,
let $\mathcal S^{\mathcal M}$ be the subsheaf of the sheaf of holomorphic functions $\mathcal O_\Omega$ whose stalk $\mathcal S^{\mathcal M}_w$ at $w \in
\Omega$ is
$$ \big \{(f_1)_w \mathcal O_w + \cdots + (f_n)_w \mathcal
O_w : f_1, \ldots , f_n \in \mathcal M \big \}, $$ 
and the characteristic space at $w\in\Omega$  
is the vector space 
$$
\mathbb V_w(\mathcal S_w^\mathcal M) ~=~ \{q\in\C[\underline z]:q(D)f\big |_w=0,\: f_{w}\in \mathcal S^{\mathcal M}_{w}\}.
$$ 
Since $$\dim\mathcal S^\mathcal M_{0}/\mathfrak
m_{0} \mathcal S^\mathcal M_{0} = \dim\cap_{j=1}^m\ker M_j ^*= \dim \tilde{\mathbb V}_{0}(\mathcal I)/\mathbb V_0(\mathcal I) = t,$$ there exists a minimal set of generators $g_1,\cdots,g_t$ of $\mathcal S^\mathcal M_{0}$ and  a $r>0$ such that
$$
K(\cdot, w)=\sum_{i=1}^t\ov{g_j(w)}K^{(j)}(\cdot,w) \mbox{~for~all~}w\in\Delta(0; r)
$$
for some choice of anti-holomorphic functions $K^{(1)}, \ldots , K^{(t)}:\Delta(0; r) \to\mathcal M$. The formula 
\begin{eqnarray}\label{gf}
q(D) (z^\alpha g)= \sum_{k\leq\alpha} \binom{\alpha}{k} z^{\alpha-k}\frac{\partial^kq}{\partial z^k}(D)(g)
\end{eqnarray}
gives 
$$
q_i^*(\bar D)K(\cdot,w)|_{w=0}=\sum_{j=1}^t\{ K^{(j)}(\cdot, w)|_{w=0}\}\{q_i^*(\bar D)\ov{g_j(w)}|_{w=0}\}
$$
for $q_i\in\tilde{\mathbb V}_{0}(\mathcal I),\,1\leq i\leq t$. The proof follows from the fact that $\mathbb V_w(\mathcal I)= \mathbb V_w(\mathcal M) =\mathbb V_w(\mathcal S^{\mathcal M}_w)$.
\end{rem}

\begin{rem}
We give details of the case where the ideal $\mathcal I$ is
singly generated, namely $\mathcal I = <p>$. From \cite{dmv}, it
follows that the reproducing kernel $K$ admits a global
factorization, that is, $K(z,w) = p(z)\chi(z,w)\bar p(w)$ for
$z,w\in\Omega$ where $\chi(w,w)\neq 0$ for all $w\in\Omega$. So we
get $K_1(\cdot, w) = p(\cdot)\chi(\cdot,w)$ for all $w\in\Omega$.
The proposition above gives a way to write down this section in term
of reproducing kernel. Let $0\in V(\mathcal I)$. Let $q_0$ be the
lowest degree term in $p$. We claim that $[q_0^*]$ gives a
non-trivial class in $ \tilde{\mathbb V}_{0}(\mathcal I)/\mathbb
V_0(\mathcal I) $. This is because all partial derivatives of
$q_0^*$ have degree less than that of $q^*_0$ and hence from
\eqref{gf}
$$
q_0^*(D) (z^\alpha g)|_0  = \frac{\partial^\alpha q_0^*}{\partial z^\alpha}(D)(p)\big{|}_0 = 0 \mbox{~for~all ~multi-indices~}\alpha \mbox{~such~that~} |\alpha|>0
$$ and thus $\frac{\partial q_0^*}{\partial z_i}\in \mathbb V_0(\mathcal I) $ for all $i,\, 1\leq i\leq m$, that is, $q_0^*\in\tilde{\mathbb V}_{0}(\mathcal I)$. Also as the lowest degree of $p-q_0$ is strictly greater than that of $q_0$,
$$
q_0^*(D)p|_0 = q_0^*(D)(p- q_0+q_0)|_0 = q_0^*(D)q_0|_0 = \parallel q_0\parallel_0^2>0
$$
This shows that $q_0^*\notin \mathbb V_0(\mathcal I)$ and hence gives a non-trivial class in $ \tilde{\mathbb V}_{0}(\mathcal I)/\mathbb V_0(\mathcal I) $. Therefore from the proof of Lemma \ref{nice}, we have
$$
q_0(\bar D)K(\cdot, w)|_{w=0} = K_1(\cdot, w)|_{w=0}q_0(\bar D)\ov {p(w)}|_0 = \parallel q_0^*\parallel_0^2 K_1(\cdot, w)|_{w=0}.
$$
Let $q_{w_0}$ denotes the lowest degree term in $z - w_0$ in the expression of $p$ around $w_0$. Then we can write
\begin{eqnarray}\label{sl}
K_1(\cdot, w)|_{w=w_0} =
\begin{cases}
\frac{K(\cdot, w)|_{w=w_0}}{p(w_0)} &\: \mbox{if}\: w_0\notin V(\mathcal I)\cap\Omega\\
\frac{q_{w_0}(\bar D)K(\cdot, w)|_{w=w_0}}{\parallel q_{w_0}^*\parallel_{w_0}^2}&\:\mbox{if}\: w_0\in V(\mathcal I)\cap\Omega.
\end{cases}
\end{eqnarray}
\end{rem}

For a fixed set of polynomials $q_1,\ldots, q_t$, the next lemma provides a sufficient condition for the classes $[q_1^*],\ldots,\,[q_t^*]$ to be linearly independent in 
$\tilde{\mathbb V}_{w_0}(\mathcal I)/\mathbb V_{w_0}(\mathcal I)$. The ideas involved in the two easy but different proofs given below will be used repeatedly in the sequel.

\begin{lem}\label{sc}
Let $q_1,\ldots, q_t$ are linearly independent polynomials in the polynomial ideal $\mathcal I$ such that $q_1,\ldots, q_t\in \tilde{\mathbb V}_{0}(\mathcal I)$. Then $[q_1^*],\ldots,[q_t^*]$ are linearly independent in 
$\tilde{\mathbb V}_{w_0}(\mathcal I)/\mathbb V_{w_0}(\mathcal I)$.
\end{lem}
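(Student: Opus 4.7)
The plan is to reduce a hypothetical linear dependence $\sum_i c_i[q_i^*]=0$ to a norm-squared equation in the Fock inner product and then invoke positive-definiteness. Assume $w_0=0$ without loss of generality. The decisive input is that each $q_i$ lies in $\mathcal I$, which is what lets one feed a specific linear combination of the $q_i$'s back into the unwound relation. I would present two parallel arguments, one intrinsic to the polynomial ring and one routed through the reproducing kernel of $[\mathcal I]$.

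For the first proof, suppose $\sum_i c_iq_i^*\in\mathbb V_0(\mathcal I)$, so $(\sum c_iq_i^*)(D)p|_0=0$ for every $p\in\mathcal I$. Rewriting through the Fock identity $q^*(D)p|_0=\langle p,q\rangle_0$ gives $\langle p,\sum_i\bar c_iq_i\rangle_0=0$ for every $p\in\mathcal I$. The decisive move is to take $p=\sum_j\bar c_jq_j$, which belongs to $\mathcal I$ precisely because each $q_j$ does; the relation collapses to $\|\sum_i\bar c_iq_i\|_0^2=0$. Positive-definiteness of the Fock inner product and the hypothesized linear independence of $q_1,\ldots,q_t$ then force $c_1=\cdots=c_t=0$.

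For the second proof, Claim $1$ of Lemma \ref{nice} yields $r(\bar D)K(\cdot,0)=0$ in $\mathcal M$ if and only if $r^*\in\mathbb V_0(\mathcal I)$, so linear independence of $[q_1^*],\ldots,[q_t^*]$ modulo $\mathbb V_0(\mathcal I)$ is equivalent to linear independence of the vectors $q_1(\bar D)K(\cdot,0),\ldots,q_t(\bar D)K(\cdot,0)$ in $\mathcal M$. Assuming $\sum_i c_iq_i(\bar D)K(\cdot,0)=0$, pairing against $f=\sum_j c_jq_j\in\mathcal I\subseteq\mathcal M$ in the Hilbert module inner product and applying \eqref{gendiff} rewrites the pairing successively as $\sum_{i,j}\bar c_ic_j\,q_i^*(D)q_j|_0=\sum_{i,j}\bar c_ic_j\langle q_j,q_i\rangle_0=\|\sum_j c_jq_j\|_0^2$. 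This quantity vanishes, and the conclusion follows as in the first proof.

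The content of both proofs is the same trick in different languages: the hypothesis $q_j\in\mathcal I$ is what licenses substituting a linear combination of the $q_j$'s and thereby converts the raw linear condition into a Hermitian norm. The hypothesis $q_j\in\tilde{\mathbb V}_0(\mathcal I)$ plays no real role in the linear independence argument itself; it merely guarantees that the classes $[q_j^*]$ naturally sit inside the quotient $\tilde{\mathbb V}_0(\mathcal I)/\mathbb V_0(\mathcal I)$ in which the conclusion is formulated. There is no real obstacle once one notices that testing against an element of $\operatorname{span}\{q_1,\ldots,q_t\}\subseteq\mathcal I$ is available.
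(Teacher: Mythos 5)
Your first argument is exactly the paper's ``Second Proof'': test the relation $\sum_i c_i q_i^*\in\mathbb V_0(\mathcal I)$ against the particular element $\sum_j\bar c_jq_j\in\mathcal I$ and read off a vanishing Fock norm, and your second argument is the same computation transported into $\mathcal M$ via \eqref{gendiff}. The paper also offers a slightly different ``First Proof'' you didn't reproduce, which tests against each $q_j$ separately and invokes nonsingularity of the Grammian $\big(\!\big(\langle q_j,q_i\rangle_{w_0}\big)\!\big)$, but your version is correct and equivalent. Your closing observation --- that the hypothesis $q_i\in\tilde{\mathbb V}_0(\mathcal I)$ is used only to make the quotient classes $[q_i^*]$ meaningful while the independence argument itself needs only $q_i\in\mathcal I$ --- is accurate and matches the role the hypothesis plays in the paper.
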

\begin{proof}[First Proof]
Suppose $\sum_{i=1}^t \alpha_i [q_i^*] = 0$ in $\tilde{\mathbb V}_{w_0}(\mathcal I)/\mathbb V_{w_0}(\mathcal I)$ for some $\alpha_i\in\C,\,1\leq i\leq t$. Thus $\sum_{i=1}^t \alpha_i q_i^* = q$ for some $q\in \mathbb V_{w_0}(\mathcal I)$. Taking the inner product of $\sum_{i=1}^t \alpha_i q_i^*$ with $q_j$ for a fixed j, we get
$$
\sum_{i=1}^t \langle q_j, q_i \rangle_{w_0} =\big( \sum_{i=1}^t \alpha_i q_i^*\big)(D)q_j|_{w_0}= q(D)q_j|_{w_0}= 0.
$$
The Grammian $\big(\!(\langle q_j, q_i \rangle_{w_0})\!\big)_{i,j=1}^t$ of the linearly independent polynomials $q_1,\ldots, q_t$ is non-singular. Thus $\alpha_i=0,\,1\leq i\leq t$, completing the proof.

\noindent\textit{Second Proof}. If $[q_1^*],\ldots,[q_t^*]$ are not linearly independent, then we may assume without loss of generality that $[q_1^*] =\sum_{i=2}^t\alpha_i [q_i^*]$ for $\alpha_1,\ldots,\alpha_t\in\C$. Therefore $[q_1^*  - \sum_{i=2}^t\alpha_i p_i^*] = 0$ in the quotient space $\tilde{\mathbb V}_{w_0}(\mathcal I)/\mathbb V_{w_0}(\mathcal I)$, that is, $q_1^*  - \sum_{i=2}^t\alpha_i q_i^* \in \mathbb V_{w_0}(\mathcal I)$. So, we have
$$
(q_1^*  - \sum_{i=2}^t\alpha_i q_i^*)(D)q|_{w_0}=0 \mbox{~for~all~} q\in\mathcal I.
$$
Taking $q=q_1  - \sum_{i=2}^t\bar\alpha_i q_i $ we have $\parallel q_1  - \sum_{i=2}^t\bar\alpha_i q_i\parallel_{w_0}^2 = 0$. Hence $q_1  = \sum_{i=2}^t\bar\alpha_i q_i$ which is a contradiction.
\end{proof}

Suppose are $p_1,...,p_t$ are a minimal set of generators for $\mathcal I$. Let $\mathcal M$ be the completion of $\mathcal I$ with respect to some inner product induced by a positive definite kernel. We recall from \cite{dp} that rank$_\C[\underline z]\mathcal
M=t$. Let $w_0$ be a fixed but arbitrary point in $\Omega$. We ask if there exist a choice of generators $q_1,...,q_t$ such that $q^*_1(\bar D)K(\cdot,w)_{0},\ldots, q^*_t(\bar D)K(\cdot,w)_{0}$ forms a basis for $\cap_{j=1}^m\ker (M_j -w_{0j}) ^*$. We isolates some instances where the answer is affirmative. However, this is not always possible (see remark \ref{ex}). From \cite[Lemma 5.11, Page-89]{dp}, we have
$$
\mbox{dim}\cap_{j=1}^m\ker M_j*=\mbox{dim}\mathcal M/\mathfrak m_0\mathcal
M=\mbox{dim}\mathcal M\otimes_{\C[\underline z]}\C_0\leq \mbox{rank}_{\C[\underline z]}\mathcal M.\mbox{dim}\C_0\leq t,
$$
where $\mathfrak m_0$ denotes the maximal ideal of $\C[\underline z]$ at $0$.  So we have $\dim \cap_{j=1}^m\ker M^*_j\leq t$. The germs $p_{10},\ldots, p_{t0}$ forms a set of generators, not necessarily minimal, for ${\mathcal S}^{\mathcal
M}_0$. However minimality can be assured under some additional hypothesis.  For example, let $\mathcal I$ be the ideal  generated by the polynomials $z_1(1+z_1), z_1(1 - z_2), z_2^2$. This is minimal set of generators for the ideal $\mathcal I$, hence for $\mathcal M$, but not for ${\mathcal S}^{\mathcal M}_0$. Since $\{z_1,z_2\}$ is a minimal set of generators for ${\mathcal S}^{\mathcal
M}_0$, it follows that $\{z_1(1+z_1), z_1(1 - z_2), z_2^2\}$ is not minimal for ${\mathcal S}^{\mathcal
M}_0$. This was pointed out by R. G. Douglas.

\begin{lem}\label{jl}
Let $p_1,\ldots,p_t$ be homogeneous polynomials, not necessarily of the same degree. Let $\mathcal I\subset \C[\underline z]$ be an ideal for which $p_1,\ldots,p_t$ is a minimal set of generators. Let $\mathcal M$ be a submodule of an analytic Hilbert module over $\C[\underline z]$ such that $\mathcal M = [\mathcal I]$. Then the germs $p_{10},\ldots, p_{t0}$ at $0$ forms a minimal set of generators for ${\mathcal S}^{\mathcal M}_0$.
\end{lem}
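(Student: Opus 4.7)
The discussion preceding the statement already shows that the germs $p_{10},\ldots,p_{t0}$ generate $\mathcal S^{\mathcal M}_0$, so only minimality remains to be established. I would argue by contradiction: suppose, after relabeling if necessary, that $p_{10}$ lies in the $\mathcal O_0$-submodule generated by $p_{20},\ldots,p_{t0}$. Then there exist convergent power series $h_2,\ldots,h_t\in\mathcal O_0$, defined on some neighborhood of the origin, satisfying
\begin{equation*}
p_1 \;=\; \sum_{i=2}^t h_i\,p_i
\end{equation*}
as holomorphic functions on that neighborhood.

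The heart of the argument is to use homogeneity to convert this analytic identity into a polynomial identity. Let $d_i=\deg p_i$, and expand each $h_i=\sum_{k\ge 0}h_i^{(k)}$ into its homogeneous components, so that each $h_i^{(k)}$ is a homogeneous polynomial of degree $k$. Comparing the homogeneous components of total degree $d_1$ on the two sides of the displayed equation: the left hand side contributes $p_1$ itself (since it is already homogeneous of that degree), while the only pieces of $h_i p_i$ of total degree $d_1$ arise from $h_i^{(d_1-d_i)}p_i$, which vanishes when $d_i>d_1$. Hence we obtain the polynomial identity
\begin{equation*}
p_1 \;=\; \sum_{\substack{2\le i\le t\\ d_i\le d_1}} h_i^{(d_1-d_i)}\,p_i
\end{equation*}
in $\C[\underline z]$, expressing $p_1$ as a polynomial combination of $p_2,\ldots,p_t$ and contradicting the minimality of $\{p_1,\ldots,p_t\}$ as a generating set of $\mathcal I$.

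The only delicate step is the passage from the stalk-level relation to the relation of polynomials, and the homogeneity hypothesis is precisely what makes this transition possible: it forces any analytic identity to be respected grade by grade. This also explains why the hypothesis cannot be dropped, as illustrated by the example $\{z_1(1+z_1),\, z_1(1-z_2),\, z_2^2\}$ mentioned just before the lemma: inhomogeneous generators permit units in $\mathcal O_0$ such as $1+z_1$ to merge pieces of different generators in a way that is forbidden in $\C[\underline z]$ but allowed in the stalk. One can alternatively phrase the whole argument via the graded Nakayama lemma by observing that, for homogeneous generators, the minimal number of generators of $\mathcal I$ equals $\dim_\C\mathcal I/\mathfrak m_0\mathcal I$ whether the computation is performed in $\C[\underline z]$ or in $\mathcal O_0$.
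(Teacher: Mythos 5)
Your proof is correct and follows essentially the same route as the paper: assume a relation among the germs, expand the holomorphic coefficients into homogeneous components, and extract the degree-$d_1$ graded piece to produce a polynomial relation contradicting minimality in $\C[\underline z]$. Your write-up is in fact a bit more careful than the paper's terse "Taylor polynomial of degree $\alpha_k-\alpha_i$" phrasing, and the closing remark relating this to graded Nakayama is a nice clarification, but the underlying idea is identical.
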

\begin{proof}
For $1\leq i\leq t$, let deg $p_i = \alpha_i$. Without loss of generality we assume that $\alpha_i\leq \alpha_{i+1} ,\, 1\leq i\leq t-1$. Suppose the germs $p_{10},\ldots, p_{t0}$ are not minimal, that is, there exist $k ( 1\leq k\leq t)$, $p_k=\sum_{i=1,i\neq k}^t\phi_ip_i$ for some choice of holomorphic functions $\phi_i,\,1\leq i\leq t, i\neq k$ defined on a suitable small neighborhood of $0$. Thus we have
$$
p_k = \sum_{i:\alpha_i\leq \alpha_k}\phi_i^{\alpha_k-\alpha_i}p_i,
$$
where $\phi_i^{\alpha_k-\alpha_i}$ is the Taylor polynomial containing  of $\phi_i$ of degree  $\alpha_k-\alpha_i$. Therefore $p_1,\ldots,p_t$ can not be a minimal set of generators for the ideal $\mathcal I$. This contradiction completes the proof.
\end{proof}

Consider the ideal $\mathcal I$ generated by the polynomials $z_1+z_2+z_1^2, z_2^3-z_1^2$. We will see later that the joint kernel at $0$, in this case is spanned by the independent vectors $p(\bar D)K(\cdot,w)|_{w=0}, q(\bar D)K(\cdot,w)|_{w=0}$, where $p=z_1+z_2$ and $q=(z_1 - z_2)^2$. Therefore any vectors in the joint kernel is of the form $(\alpha p+\beta q)(\bar D)K(\cdot,w)|_{w=0}$ for some $\alpha,\beta\in\C$. It then follows that $\alpha p+\beta q$ and $\alpha' p+\beta' q$ can not be a set of generators of $\mathcal I$ for any choice of $\alpha,\beta,\alpha',\beta'\in\C$. However in certain cases, this is possible. We describe below the case where $\{p_1(\bar D)K(\cdot,w)|_{w=0},...,p_t(\bar
D)K(\cdot,w)|_{w=0}\}$ forms a basis for $\cap_{j=1}^n \ker M_j^*$ for an obvious choice of generating set in $\mathcal I$.

\begin{lem}\label{kth} Let $p_1,\ldots,p_t$ be homogeneous polynomials of same degree. Suppose that $\{p_1,\ldots,p_t\}$ is a minimal set of generators for the ideal $\mathcal I\subset\C[\underline{z}]$.  Then the set 
$$
\{p_1(\bar D)K(\cdot,w)|_{w=0},...,p_t(\bar
D)K(\cdot,w)|_{w=0}\}
$$ 
forms a basis for $ \displaystyle\cap_{j=1}^n \ker M_j^*$.
\end{lem}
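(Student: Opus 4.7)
The plan is to invoke Lemma \ref{nice} with the choice $q_i=p_i^*$; since $q_i^*(\bar D)=p_i(\bar D)$, this reduces the statement to showing that the classes $[p_1^*],\ldots,[p_t^*]$ form a basis of $\tilde{\mathbb V}_{0}(\mathcal I)/\mathbb V_{0}(\mathcal I)$. The linear independence of these classes will come from Lemma \ref{sc} applied to $q_i=p_i$, and the fact that $t$ independent classes already exhaust the quotient will come from equation \eqref{crucial} together with the rank bound $\dim\bigcap_{j=1}^m\ker M_j^*\leq \mathrm{rank}_{\C[\underline z]}\mathcal M=t$ recorded just above the lemma statement.

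First I verify the two hypotheses of Lemma \ref{sc}. The polynomials $p_1,\ldots,p_t$ are $\C$-linearly independent: a non-trivial relation $\sum\alpha_ip_i=0$ would let us eliminate any $p_k$ with $\alpha_k\neq 0$, contradicting minimality of the generating set. Second, and this is the crux, I must check $p_i\in\tilde{\mathbb V}_{0}(\mathcal I)$, i.e.\ $\partial p_i/\partial z_j\in\mathbb V_0(\mathcal I)$ for each $j$. Here the hypothesis that all the $p_i$ have the \emph{same} degree $d$ is essential. Any $f\in\mathcal I$ is a $\C[\underline z]$-linear combination of the $p_i$, so its homogeneous expansion around $0$ contains no terms of degree strictly less than $d$. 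The polynomial $\partial p_i/\partial z_j$ is homogeneous of degree $d-1$, and the operator $r(D)$ associated with a homogeneous polynomial $r$ of degree $k$ sends a homogeneous polynomial of degree $\ell\neq k$ either to $0$ or to something vanishing at the origin; equivalently, in terms of the Fock pairing, distinct homogeneous components are orthogonal. Hence $(\partial p_i/\partial z_j)(D)f\big|_{0}=0$ for every $f\in\mathcal I$, which is precisely the claim $p_i\in\tilde{\mathbb V}_{0}(\mathcal I)$.

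With both hypotheses in place, Lemma \ref{sc} yields $t$ linearly independent classes $[p_1^*],\ldots,[p_t^*]$ in $\tilde{\mathbb V}_{0}(\mathcal I)/\mathbb V_{0}(\mathcal I)$. By equation \eqref{crucial}, the dimension of this quotient equals $\dim\bigcap_{j=1}^m\ker M_j^*$, which is bounded above by $t$. Consequently the quotient is exactly $t$-dimensional and the independent classes form a basis. Applying Lemma \ref{nice} with $q_i=p_i^*$ translates this into the desired conclusion that $p_1(\bar D)K(\cdot,w)|_{w=0},\ldots,p_t(\bar D)K(\cdot,w)|_{w=0}$ is a basis of $\bigcap_{j=1}^m\ker M_j^*$.

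The only non-routine step is the verification $p_i\in\tilde{\mathbb V}_{0}(\mathcal I)$, and it fails without the equal-degree hypothesis: if generators had different degrees then $\partial p_i/\partial z_j$ could pair non-trivially with the lowest homogeneous component of some $p_k$ of smaller degree, exactly as in the example $\langle z_1+z_2+z_1^2,\,z_2^3-z_1^2\rangle$ mentioned before the lemma. The homogeneity-of-same-degree assumption is what synchronises the degrees of $\partial p_i/\partial z_j$ with the gap below which $\mathcal I$ is empty, and this is what makes the proof go through.
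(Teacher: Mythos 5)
Your proof is correct and follows the paper's argument essentially step for step: reduce via Lemma \ref{nice}, verify $p_i^*\in\tilde{\mathbb V}_{0}(\mathcal I)$ by the same degree-comparison (the derivative has degree $k-1$ while every element of $\mathcal I$ has lowest degree $\geq k$), and apply Lemma \ref{sc}. The only cosmetic difference is in closing the dimension count: you use the rank bound $\dim\bigcap_j\ker M_j^*\leq t$ from \cite{dp} directly, whereas the paper routes through Lemma \ref{jl} as well; both give $\dim\tilde{\mathbb V}_{0}(\mathcal I)/\mathbb V_{0}(\mathcal I)=t$.
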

\begin{proof}
For $1\leq i\leq t$, let deg $p_i=k$. It is enough to show, using Lemma \ref{nice}, \ref{sc} and \ref{jl}, that the polynomials $p_1^*,\ldots,p_t^*$ are in $\tilde{\mathbb V}_{0}(\mathcal I)$. Since $\frac{\partial p_i^*}{\partial z_j}$ is of degree at most $k-1$ for each $i$ and $j,\,1\leq i\leq t,\,1\leq j\leq m$, and the the term of lowest degree in each polynomial in the ideal $p\in\mathcal I$ will be at least of degree $k$, it follows that $\frac{\partial p_i^*}{\partial z_j}(D)p|_0=0,\,p\in\mathcal I,\,1\leq i\leq t,1\leq j\leq m$. This completes the proof.
\end{proof}

\begin{ex}
Let $\mathcal M$ be an analytic Hilbert module over
$\Omega\subseteq{\C}^m$, and ${\mathcal M}_{n}$ be a submodule of
$\mathcal M$ formed by the closure of polynomial ideal $\mathcal I$
in $\mathcal M$ where $\mathcal
I= \langle z^{\alpha}=z_1^{\alpha_1}...z_m^{\alpha_m}:\alpha_i\in\N\cup\{0\},|\alpha|=\d_{i=1}^m\alpha_i=n  \rangle$.
We note that $Z(\tau)=\{0\}$. Let $K_{n}$ be the reproducing
kernel corresponding to ${\mathcal M}_{n}$. Then,
\begin{enumerate}
\item[(1)] ${\mathcal M}_{n}=\{f\in\mathcal M: \partial^{\alpha}f(0)=0, \mbox{~for~}\alpha_i\in\N\cup\{0\},|\alpha|\leq n-1\}$
\item[(2)] $\bigcap_{j=1}^m\ker(M_j|_{{\mathcal M}_{n}}-w_j)^*= \left\{
\begin{array}{ll}
\rm{span}\{K_{n}(\cdot,w)\}, & \hbox{for $w\neq 0$;} \\
\rm{span}\{\bar\partial^{\alpha}K_{n}( \cdot,
$w$)|_{w=0}:\alpha_i\in\N\cup\{0\},|\alpha|=n\}, & \hbox{for
$w=0$.}
\end{array}
\right. $
\end{enumerate}
\end{ex}

We now go further and show that a similar description of the joint kernel is possible even if the restrictive assumption of ``same degree" is removed. We begin with the simple case of two generators.

\begin{prop}\label{gen}
Suppose $\{p_1, p_2\}$ is a minimal set of generators for the ideal
$\mathcal I$. and are homogeneous with deg $p_1\neq$ deg $p_2$. Let $K$ be the reproducing kernel corresponding the Hilbert module $[\mathcal I]$, which is assumed to be in $\mathfrak B_1(\Omega^*)$. Then there exist polynomials $q_1,q_2$ which generate
the ideal $\mathcal I$ and 
$$
\{q_1(\bar
D)K(\cdot,w)|_{w=0},\, q_2(\bar D)K(\cdot,w)|_{w=0}\}
$$ 
is a basis for $\cap_{j=1}^m\ker M_j^*$.
\end{prop}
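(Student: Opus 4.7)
My plan is to seek $q_1,q_2$ of the form $q_1=p_1$ and $q_2=p_2-\phi\,p_1$, where $\phi\in\C[\underline z]$ is a homogeneous polynomial of degree $k_2-k_1$ (with $k_i=\deg p_i$; without loss of generality $0\in\Omega$ and $k_1<k_2$) to be determined below. This ansatz automatically makes $\{q_1,q_2\}$ a generating set for $\mathcal I$, since $p_2=q_2+\phi\,p_1$, and guarantees $q_2\neq 0$ by the minimality of $\{p_1,p_2\}$. Because Lemma \ref{jl} combined with \eqref{crucial} gives $\dim\bigl(\tilde{\mathbb V}_0(\mathcal I)/\mathbb V_0(\mathcal I)\bigr)=2$, Lemma \ref{nice} reduces the entire proposition to showing that $[q_1^*]$ and $[q_2^*]$ form a basis of $\tilde{\mathbb V}_0(\mathcal I)/\mathbb V_0(\mathcal I)$.

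The first step is to show $q_1^*,q_2^*\in\tilde{\mathbb V}_0(\mathcal I)$. For $q_1^*=p_1^*$ this is the degree-count argument of Lemma \ref{kth}: $\partial p_1^*/\partial z_j$ has degree $k_1-1$, strictly below the minimum degree present in $\mathcal I$. For $q_2^*$ the degree argument alone is insufficient, since $\partial q_2^*/\partial z_j$ has degree $k_2-1\geq k_1$; however, because $p_2$ cannot contribute below degree $k_2$, the homogeneous component of $\mathcal I$ in degree $k_2-1$ is precisely $\{\psi\,p_1:\psi\in\C[\underline z]^{(k_2-1-k_1)}\}$, where $\C[\underline z]^{(d)}$ denotes the homogeneous polynomials of degree $d$. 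Combining the Fock-adjoint identity $\langle p_1\psi,f\rangle_0=\langle\psi,p_1^*(D)f\rangle_0$ (since $p_1^*(D)$ is the Fock adjoint of multiplication by $p_1$) with the commutation $[\partial/\partial z_j,p_1^*(D)]=0$, and using the non-degeneracy of the Fock inner product on each homogeneous piece, the condition $q_2^*\in\tilde{\mathbb V}_0(\mathcal I)$ collapses to the single equation
\[
p_1^*(D)\,q_2=0,\quad\text{equivalently,}\quad p_1^*(D)(\phi\,p_1)=p_1^*(D)p_2.
\]

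The main obstacle, I expect, is constructing such a $\phi$. I plan to introduce the linear operator $T:\C[\underline z]^{(k_2-k_1)}\to\C[\underline z]^{(k_2-k_1)}$ given by $T\phi=p_1^*(D)(\phi\,p_1)$ and establish the identity $\langle T\phi,\psi\rangle_0=\langle\phi\,p_1,\psi\,p_1\rangle_0$ by a second application of the Fock-adjoint relation. This identity exhibits $T$ as a positive definite Hermitian operator on $\C[\underline z]^{(k_2-k_1)}$ with $\langle T\phi,\phi\rangle_0=\|\phi\,p_1\|_0^2$, so $T$ is invertible and I can take $\phi:=T^{-1}\bigl(p_1^*(D)p_2\bigr)$.

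The final step is the linear independence of $[p_1^*]$ and $[q_2^*]$, which I would handle by the Grammian method of the first proof of Lemma \ref{sc}. If $\alpha p_1^*+\beta q_2^*\in\mathbb V_0(\mathcal I)$, applying the corresponding differential operator to $p_1$ and to $q_2$ (both in $\mathcal I$) and evaluating at $0$ yields a $2\times 2$ system whose matrix is the Fock-Grammian of $(p_1,q_2)$. Since $p_1$ and $q_2$ are homogeneous of different degrees they are Fock-orthogonal, so this matrix is diagonal with strictly positive entries $\|p_1\|_0^2$ and $\|q_2\|_0^2$; hence $\alpha=\beta=0$. Combined with the first step, this gives the required basis, and Lemma \ref{nice} yields the proposition.
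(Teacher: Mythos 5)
Your proof is correct, and it takes a genuinely cleaner route to the key step than the paper does. Both proofs start from the same ansatz $q_1=p_1$, $q_2 = p_2 + (\text{hom.\ degree } n)\cdot p_1$ and reduce, via Lemma \ref{nice} and \eqref{crucial}, to verifying $q_2^*\in\tilde{\mathbb V}_0(\mathcal I)$ together with linear independence. The difference is in how the invertibility of the constraint system is established. The paper expands $\frac{\partial^{|\alpha|}q_2^*}{\partial z^{\alpha}}(D)p_1\big|_0 = 0$ ($|\alpha|=n$) directly by the Leibniz rule, obtaining a matrix equation $\bar A\bar\gamma_n = \Gamma$ with $A = \sum_{r=0}^n A(r)$, and then shows each $A(r)$ is non-negative definite by identifying it (via equation \eqref{pd1}) as a sum of Grammians of explicitly constructed polynomial tuples $X^r_\mu$, with $A(0)$ diagonal and positive. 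You instead observe that, by degree bookkeeping and the Fock adjoint $M_{p_1}^* = p_1^*(D)$, the entire family of constraints collapses to the single polynomial equation $p_1^*(D)q_2 = 0$, i.e.\ $T\phi = p_1^*(D)p_2$ with $T = M_{p_1}^*M_{p_1}|_{\C[\underline z]^{(n)}}$, which is manifestly self-adjoint and positive definite (as $A^*A$ with $A$ injective). This is the same matrix as the paper's $A$ in disguise — $\partial^\alpha(p_1^*(D)q_2)|_0 = \overline{\langle p_1, \partial^\alpha q_2\rangle_0}$ recovers the paper's entries — but your formulation makes the positivity structurally transparent and eliminates the combinatorial verification in \eqref{pd1}. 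Your linear-independence argument is in substance the first proof of Lemma \ref{sc}, sharpened by the observation that the $2\times2$ Grammian of $(p_1,q_2)$ is automatically diagonal because the two are homogeneous of different degrees; the paper simply cites Lemma \ref{sc} for this. One small point: rather than deriving $\dim\bigl(\tilde{\mathbb V}_0/\mathbb V_0\bigr)=2$ up front from Lemma \ref{jl}, the paper gets away with the inequality $\dim\leq t=2$ from \cite{dp} plus the lower bound from linear independence; your route is equally valid and arguably more direct.
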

\begin{proof}
Let deg $p_1=k$ and deg $p_2=k+n$ for some $n\geq 1$. The set $\{p_1, p_2+(\sum_{|i|=n}\gamma_iz^i)p_1\}$ is a minimal set of generators for $\mathcal I,\, \g_i\in\C$ where $i = (i_1,\ldots,i_m)$ and $|i| =
i_1+\ldots+i_m$. We will take $q_1=p_1$ and find constants $\g_i$ in $\C$ such that
$$
q_2 =p_2+ (\sum_{|i|=n}\gamma_iz^i)p_1.
$$
We have to show (Lemma \ref{nice}) that $\{[q_1^*],[q_2^*]\}$ is a basis in $\tilde{\mathbb V}_{0}(\mathcal I)/\mathbb V_0(\mathcal I)$.
From the equation \eqref{crucial} and Lemma \ref{sc}, it is enough to show that $q_2^*$ is a  in $\tilde{\mathbb V}_{0}(\mathcal I)$. To ensure that $\frac{\partial q_2^*}{\partial z_k}\in\mathbb V_0(\mathcal I),\,1\leq k\leq m$, we need to check: 
$$
\frac{\partial^{|\alpha|}q_2^*}{\partial z^{\alpha}}(D)p_i|_{w=0} = \langle p_i,  \frac{\partial^{|\alpha|}q_2}{\partial z^{\alpha}}\rangle|_0 = 0, 
$$
for all multi-index $\alpha =(\alpha_1,\ldots,\alpha_m)$  with 
$1\leq|\alpha|\leq n$ and $i=1,2$. For $|\a|>n$, these conditions are evident. Since the degree of the polynomial $q_2$ is $k+n$, we have $\langle p_2,  \frac{\partial^{|\alpha|}q_2}{\partial z^{\alpha}}\rangle_0 = 0,\,1\leq|\alpha|\leq n$. If $n>1$, then $\langle p_1,  \frac{\partial^{|\alpha|}q_2}{\partial z^{\alpha}}\rangle_0 = 0,\,1\leq|\alpha|< n$. To find $\gamma_{i},\,i = (i_1,\ldots,i_m)$, we solve the equation $\langle p_1,  \frac{\partial^{|\alpha|}q_2}{\partial z^{\alpha}}\rangle|_0 = 0$ for all $\alpha$ such that $|\alpha|=n$. By the Leibnitz rule,
\beqa
\frac{\partial^{|\alpha|}q_2^*}{\partial z^{\alpha}} &=& \frac{\partial^{|\alpha|}p_2^*}{\partial z^{\alpha}}+\sum_{\nu\leq \alpha}\binom{\alpha}{\nu}\partial^{\alpha -\nu}(\sum_{|i|=n} \bar \g_iz^i)\frac{\partial^{|\nu|}p_1^*}{\partial z^{\nu}}   \\ &=& \frac{\partial^{|\alpha|}p_2^*}{\partial z^{\alpha}}+ \sum_{\nu\leq \alpha}\binom{\alpha}{\nu}(\sum_{|i|=n,i\geq \alpha - \nu} \bar \g_i\frac{i!}{(i-\alpha+\nu)!}z^{i-\alpha+\nu})\frac{\partial^{|\nu|}p_1^*}{\partial z^{\nu}} .
\eeqa
Now $\frac{\partial^{|\alpha|}p^*}{\partial z^{\alpha}}(D)p_i|_{w=0}=0$ gives
\beq\label{rew}
 0&=&\Big{(}\frac{\partial^{|\alpha|}p_2^*}{\partial z^{\alpha}}+ \sum_{\nu\leq \alpha}\binom{\alpha}{\nu}\Big{(}\sum_{|i|=n,i\geq \alpha - \nu} \bar \g_i\frac{i!}{(i-\alpha+\nu)!}z^{i-\alpha+\nu}\Big{)}\frac{\partial^{|\nu|}p_1^*}{\partial z^{\nu}}\Big{)}(D)p_1|_{w=0}
\\ &=& \langle p_1, \frac{\partial^{|\alpha|}p_2}{\partial z^{\alpha}}\rangle_0+
\sum_{r=0}^n\sum_{|i|=n}\ov{A_{\a i}(r)}\bar \g_i,\nonumber
\eeq
where given the multi-indices $\alpha, i$,
\beq\label{le}
A_{\a i}(r)~= \begin{cases}\sum_{\nu} \binom{\alpha}{\nu}\frac{i!}{(i-\alpha+\nu)!}\langle \frac{\partial^{|\nu|}p_1}{\partial z^{\nu}}, \frac{\partial^{|i-\alpha+\nu|}p_1}{\partial z^{i-\alpha+\nu}} \rangle_0 & |\nu|=r,\nu\leq \alpha, i\geq \alpha - \nu;\\ 0 & \mbox{otherwise}.\end{cases}
\eeq
Let  $A(r) = \big(\!\big(A_{\a i}(r)\big)\!\big) $ be the  $\tbinom{n+m-1}{m-1}\times \tbinom{n+m-1}{m-1}$ matrix in colexicographic order on $\alpha$ and $i$.
Let $A = \sum_{r=0}^n A(r)$ and $\g_n$ be the $\tbinom{n+m-1}{m-1}\times 1$ column vector $(\g_i)_{|i|=n}$. Thus the equation \eqref{rew} is of the form
\beq\label{lineq}
\bar A\bar\g_n= \Gamma ,
\eeq
where $\Gamma$ is the $\binom{n+m-1}{m-1}\times 1$ column vector $(-\langle p_1, \frac{\partial^{|\alpha|}p_2}{\partial z^{\alpha}}\rangle_0)_{|\a|=n}$. Invertibility of the coefficient matrix $A$ then guarantees the existence of a solution to the equation \eqref{lineq}. We show that the matrix $A(r)$ is non-negative definite and the matrix $A(0)$ is diagonal:
\beq\label{pd}
A(0)_{\a i}= \begin{cases} \a !\parallel p_1\parallel^2
&\mbox{if}\,\, \a=i\\ 0  & \mbox{if}\,\,\a\neq i. \end{cases}
\eeq
and therefore positive definite. Fix a $r, \,1\leq r\leq n$. To prove that $A(r)$ is non-negative definite, we show that it is the Grammian with respect to Fock inner product at $0$. To each $\mu= (\mu_1,\ldots,\mu_m)$ such that $|\mu| = n-r$, we associate a $1\times\binom{n+m-1}{m-1}$ tuple of polynomials $X_{\mu}^{r}$, defined as follows
$$
X^{r}_\mu(\beta) = \begin{cases}\mu !\binom{\beta}{\beta-\mu}\frac{\partial^{|\beta-\mu|}p_1}{\partial z^{\beta-\mu}} &\mbox{if}\,\, \beta\geq \mu\\ 0 &\mbox{otherwise,}\end{cases}
$$
where $\beta= (\beta_1,\ldots,\beta_m),\,|\beta|=n$ ($\beta\geq \mu$ if and only if $\beta_i\geq\mu_i$ for all $i$). By $X^{r}_{\mu}\cdot (X^{r}_{\mu})^t$, we denote the $\binom{n+m-1}{m-1}\times \binom{n+m-1}{m-1}$ matrix whose $\a i$-th element is $\langle X^{r}_\mu(\a), X^{r}_\mu(i)\rangle_0, \, |\a|=n=|i|$. We note that
\beq\label{pd1}
\sum_{|\mu|=n-r}\frac{1}{\mu!}(X^{r}_\mu\cdot (X^{r}_\mu)^t)_{\a i} &=& \sum_{|\mu|=n-r}\frac{1}{\mu!}\langle X^{r}_\mu(\a), X^{r}_\mu(i)\rangle_0 \\ &=& \sum_{|\mu|= n-r,\a\geq \mu, i\geq\mu }\frac{1}{\mu!}\langle \mu !\binom{\a}{\a-\mu} \frac{\partial^{|\a-\mu|}p_1}{\partial z^{\a-\mu}}, \mu !\binom{i}{i-\mu} \frac{\partial^{|i-\mu|}p_1}{\partial z^{i-\mu}} \rangle_0\nonumber\\ &=& \sum_{|\nu|= r,\nu\leq\a, i\geq \a -\nu }(\a -\nu)!\binom{\a}{\nu}\binom{i}{i-\a+\nu}\langle  \frac{\partial^{|\a-\mu|}p_1}{\partial z^{\a-\mu}}, \frac{\partial^{|i-\mu|}p_1}{\partial z^{i-\mu}} \rangle_0 \nonumber \\
&=& A_{\a i}(r). \nonumber
\eeq
Since $X^{r}_\mu\cdot (X^{r}_\mu)^t$ is the Grammian of the vector tuple $X^{r}_\mu$, it is non-negative definite. Hence $A(r) = \sum_{|\mu|=n-r}\frac{1}{\mu!}(X^{r}_\mu\cdot (X^{r}_\mu)^t)$ is non-negative definite. Therefore $A$ is positive definite and hence equation \eqref{lineq} admits a solution, completing the proof.
\end{proof}

Let $\mathcal I$ be a homogeneous polynomial ideal. As one may expect, the proof in the general case is considerably more involved. However the idea of the proof is similar to the simple case of two generators. Let $p_1,\ldots,p_v$ be a minimal set of generators, consisting of homogeneous polynomials, for the ideal $\mathcal I$. We arrange the set $\{p_1,\ldots, p_v\}$ in blocks of polynomials $P^1,\ldots,P^k$ according to ascending order of their degree, that is,
$$
\{P^1,\ldots,P^k\} ~=~ \{ p_1^1,\ldots,p_{u_1}^1,p_1^2,\ldots,p_{u_2}^2,\ldots,p_1^l,\ldots,p_{u_l}^l,\ldots,p_1^k,\ldots,p_{u_k}^k \},
$$ 
where each $P^l = \{p_1^l,\ldots,p_{u_l}^l\},\, 1\leq l\leq k$ consists of homogeneous polynomials of the same degree, say $n_l$ and
$n_{l+1}>n_l,\,1\leq l\leq k-1.$ 
As before, for $l=1$, we take $q_j^1=p_j^1,\, 1\leq j\leq u_1$ and for $l\geq 2$ take
$$
q_j^l = p_j^l+\sum_{f=1}^{l-1}\sum_{s=1}^{u_f}\g_{lj}^{fs}p_s^f,
\mbox{~where~}
\g_{lj}^{fs}(z) ~=~\sum_{|i|=n_l-n_f} \g_{lj}^{fs}(i)z^i.
$$ 
Each $\g_{lj}^{fs}$ is a polynomial of degree $n_l-n_f$ for some choice of $\g_{lj}^{fs}(i)$ in $\C$. So we obtain another set of polynomials  $\{Q^1,\ldots,Q^k\}$ with $Q^l = \{q_1^l,\ldots,q_{u_l}^l\},\, 1\leq l\leq k$ satisfying the the same property as the set of polynomials $\{P^1,\ldots,P^k\}$. From Lemma \ref{nice} and \ref{sc}, it is enough to check $q_j^{l*}$ is in $\tilde{\mathbb V}_{0}(\mathcal I)$. This condition yields a linear system of equation as in the proof of Proposition \ref{gen}, except that the co-efficient matrix is a block matrix with each block similar to $A$ defined by the equation \eqref{le}. For $q_j^{l*}$ in $\tilde{\mathbb V}_{0}(\mathcal I)$, the constants  $\g_{lj}^{fs}(i)$ must satisfy:
\beqa\label{m}
\lefteqn{0 = \frac{\partial^{|\a|}q_j^{l*}}{\partial z^{\a}}(D)p_t^e|_0}\\  &=& \langle p_t^e,\frac{\partial^{|\a|}p_j^l}{\partial z^{\a}}\rangle_0 + \sum_{f=1}^{l-1}\sum_{s=1}^{u_f}\sum_{\nu\leq \a}\binom{\a}{\nu}\sum_{|i|=n_l-n_f, i\geq \a-\nu} \ov{\g_{lj}^{fs}(i)}\frac{i!}{(i-\alpha+\nu)!} \langle \frac{\partial^{|i-\a+\nu|}p_t^e}{\partial z^{i-\a+\nu}}, \frac{\partial^{|\nu|}p_s^f}{\partial z^{\nu}}\rangle_0
\eeqa
All the terms in the equation are zero except when $|\a| = n_l - n_d,\, 1\leq d\leq l-1$. For $e=d=f$, we have the equations
\beq\label{m1}
-\langle p_t^d,\frac{\partial^{|\a|}p_j^l}{\partial z^{\a}}\rangle_0~=~\sum_{s=1}^{u_d}\!\sum_{r=0}^{n_l-n_d}\!\sum_{|i|=n_l-n_d}\! \ov{\big(A_{st}^d(r)\big)_{\alpha i}}\ov{\g_{lj}^{ds}(i)},
\eeq
where
$$
\big{(}A_{st}^d(r)\big{)}_{\a i}~=\begin{cases} \sum_{\nu} \binom{\alpha}{\nu}\frac{i!}{(i-\alpha+\nu)!}\langle \frac{\partial^{|\nu|}p_s^d}{\partial z^{\nu}},\frac{\partial^{|i-\a+\nu|}p_t^d}{\partial z^{i-\a+\nu}} \rangle_0 & |\nu|=r,\nu\leq \alpha, i\geq \alpha - \nu;\\ 0 & \mbox{otherwise}.\end{cases}
$$
Let $A_{st}^d(r)$ be the $\binom{n_l-n_{d-1}+m-1}{m-1}\times \binom{n_l-n_{d-1}+m-1}{m-1}$ matrix whose $\alpha i$-th element is $\big{(}A_{st}^d(r)\big{)}_{\a i}$.
We consider the block-matrix $A^d(r)= (A_{st}^d(r)),\,1\leq s,t\leq u_d$. 

Fix a $r, \,1\leq r\leq n_l -n_d$. To each $\mu= (\mu_1,\ldots,\mu_m)$ such that $|\mu| = n_l - n_d-r$, associate a $1\times\binom{n_l - n_d+m-1}{m-1}$ tuple of polynomials $X_{\mu r}^{ds}$ defined as follows:
$$
X_{\mu r}^{ds}(\beta) = \begin{cases}\mu !\binom{\beta}{\beta-\mu}\frac{\partial^{|\beta-\mu|}p_s^d}{\partial z^{\beta-\mu}} &\mbox{if}\,\, \beta\geq \mu\\ 0 &\mbox{otherwise,}\end{cases}
$$
where $\beta= (\beta_1,\ldots,\beta_m)$ with $|\beta|=n_l -n_d$. Let $X_{\mu r}^{d} = (X_{\mu r}^{d1},\ldots,X_{\mu r}^{d(n_l - n_d)})$. Using same argument as in \eqref{pd} and \eqref{pd1}, we see that the matrix
$$
A^d(r) = \sum_{|\mu|=n-r}\frac{1}{\mu!}(X_{\mu r}^{d}\cdot (X_{\mu r}^{d})^t)
$$
is non-negative definite when $r\geq 0$ and $A^d(0)$ is positive definite. Thus $A^d=\sum_{r=0}^{n_l -n_d}A^d(r)$ is positive definite. Let 
$$
\g_{lj}^d = ((\g_{lj}^{d1}(i))_{|i|=n_l-n_d},\ldots,(\g_{lj}^{d(n_l -n_d)}(i))_{|i|=n_l-n_d})^{tr},
$$ 
where each $(\g_{lj}^{ds}(i))_{|i|=n_l-n_d}$ is a $\binom{n_l - n_{d}+m-1}{m-1}\times 1$ column vector. Define
$$
\Gamma^d_{lj} = ((-\langle p_1^d,\frac{\partial^{|\a|}p_j^l}{\partial z^{\a}}\rangle_0)_{|\a|=n_l-n_d},\ldots,(-\langle p_{u_d}^d,\frac{\partial^{|\a|}p_j^l}{\partial z^{\a}}\rangle_0)_{|\a|=n_l-n_d}).
$$
The equation \eqref{m1} is then takes the form $\ov{A^d\g^d_{lj}} = \Gamma^d_{lj}$, which admits a solution (as $A^d$ is invertible) for each $d,l$ and $j$. Thus we have proved the following theorem.

\begin{thm}\label{hi}
Let $\mathcal I\subset\C[\underline{z}]$ be  a homogeneous ideal and  $\{p_1,\ldots,p_v\}$  be a minimal set of generators for $\mathcal I$ consisting of homogeneous polynomials. Let $K$ be the reproducing kernel corresponding the Hilbert module $[\mathcal I]$, which is assumed to be in $\mathfrak B_1(\Omega^*)$. Then there exists a set of generators $q_1,...,q_v$ for the ideal $\mathcal I$ such that the set $\{q_i(\bar
D)K(\cdot,w)|_{w=0}:\,1\leq i \leq v\}$ is a basis for $ \displaystyle\cap_{j=1}^n \ker M_j^*$.
\end{thm}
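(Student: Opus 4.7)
The plan is to reduce, via Lemmas \ref{nice} and \ref{sc}, the assertion to the solvability of a linear system with positive-definite coefficient matrix, extending the two-generator argument of Proposition \ref{gen}. I would arrange the homogeneous generators into blocks $P^l=\{p_1^l,\ldots,p_{u_l}^l\}$ consisting of polynomials of common degree $n_l$, with $n_1<n_2<\cdots<n_k$, and look for new generators of the form $q_j^1:=p_j^1$ and, for $l\geq 2$,
\[
q_j^l \;=\; p_j^l + \sum_{f=1}^{l-1}\sum_{s=1}^{u_f}\gamma_{lj}^{fs}(z)\,p_s^f,\qquad
\gamma_{lj}^{fs}(z)=\sum_{|i|=n_l-n_f}\gamma_{lj}^{fs}(i)\,z^i,
\]
each $\gamma_{lj}^{fs}$ homogeneous of degree $n_l-n_f$ with coefficients to be determined. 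Since every correction term already lies in $\mathcal{I}$, the $q_j^l$ automatically generate $\mathcal{I}$ (and remain a minimal generating set, being a triangular modification of $\{p_j^l\}$).

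By Lemma \ref{nice} it is enough to show the classes $[q_j^{l*}]$ form a basis of $\tilde{\mathbb V}_0(\mathcal{I})/\mathbb V_0(\mathcal{I})$. The dimension of this quotient is $v$ by \eqref{crucial} together with Lemma \ref{jl}, so by Lemma \ref{sc} it suffices to choose the coefficients $\gamma_{lj}^{fs}(i)$ so that each $q_j^{l*}$ belongs to $\tilde{\mathbb V}_0(\mathcal{I})$, i.e.,
\[
\tfrac{\partial q_j^{l*}}{\partial z_k}(D)\,p\,\big|_0=0 \qquad\text{for every } p\in\mathcal{I},\; 1\le k\le m.
\]
Testing on the homogeneous generators $p_t^e$ and using that the Fock pairing at $0$ of homogeneous polynomials of different degrees vanishes, the only non-trivial constraints are those with $|\alpha|=n_l-n_d$ and $e=d$ for some $1\le d\le l-1$. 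Expanding by the Leibnitz rule rewrites them, for each fixed $(l,j,d)$, as a linear system $\overline{A^d\gamma^d_{lj}}=\Gamma^d_{lj}$ in the unknowns $\gamma_{lj}^{ds}(i)$, where $A^d=\sum_{r=0}^{n_l-n_d}A^d(r)$ is the block matrix defined above.

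The main obstacle is invertibility of $A^d$. I would establish it exactly as in Proposition \ref{gen}: to each $\mu$ with $|\mu|=n_l-n_d-r$ associate the tuple $X_{\mu r}^{d}=(X_{\mu r}^{d,1},\ldots,X_{\mu r}^{d,u_d})$ with
\[
X_{\mu r}^{d,s}(\beta)\;=\;\mu!\binom{\beta}{\beta-\mu}\frac{\partial^{|\beta-\mu|}p_s^d}{\partial z^{\beta-\mu}}\quad\text{if }\beta\ge\mu,
\]
and zero otherwise, and verify the identity
\[
A^d(r)\;=\;\sum_{|\mu|=n_l-n_d-r}\tfrac{1}{\mu!}\,X_{\mu r}^{d}\cdot (X_{\mu r}^{d})^{t}
\]
by the same Leibnitz-rule rearrangement used for \eqref{pd1}. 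This exhibits each $A^d(r)$ as a sum of Gram matrices in the Fock inner product, hence positive semidefinite; for $r=0$ the matrix is block-diagonal in the multi-index $\alpha$, with blocks $\alpha!\,(\langle p_s^d,p_t^d\rangle_0)_{s,t=1}^{u_d}$, which are strictly positive definite because $p_1^d,\ldots,p_{u_d}^d$ are linearly independent (any linear relation within a single degree would contradict minimality of the homogeneous generating set of $\mathcal{I}$). Hence $A^d$ is positive definite, the system is uniquely solvable, and the desired $q_j^l$ exist. The $q_j^l$ being linearly independent in $\mathcal{I}$, Lemma \ref{sc} then gives linear independence of the $v$ classes $[q_j^{l*}]$ in $\tilde{\mathbb V}_0(\mathcal{I})/\mathbb V_0(\mathcal{I})$, and Lemma \ref{nice} delivers the required basis $\{q_j^l(\bar D)K(\cdot,w)|_{w=0}\}$ of $\bigcap_{j=1}^m\ker M_j^*$.
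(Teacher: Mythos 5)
Your proposal follows the paper's proof step for step: the same block decomposition $P^1,\ldots,P^k$ by degree, the same triangular ansatz $q_j^l = p_j^l + \sum_{f<l}\sum_s\gamma_{lj}^{fs}p_s^f$, the same reduction via Lemmas \ref{nice}, \ref{sc} and formula \eqref{crucial} to the condition $q_j^{l*}\in\tilde{\mathbb V}_0(\mathcal I)$, and the same positive-definiteness argument for $A^d=\sum_r A^d(r)$ realized as a weighted sum of Gram matrices exactly as in the two-generator case \eqref{pd}--\eqref{pd1}. The only way you deviate is that you spell out explicitly why $A^d(0)$ is positive definite --- namely that it is block-diagonal in $\alpha$ with blocks $\alpha!\,\big(\langle p_s^d,p_t^d\rangle_0\big)_{s,t}$, which are Gram matrices of the linearly independent generators within the degree block --- whereas the paper leaves this implicit; this is a worthwhile small clarification, but it does not alter the route.
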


We remark that the new set of generators $q_1, \ldots , q_v$ for $\mathcal I$ is more or less ``\emph {canonical}''! It is uniquely determined modulo a linear transformation as shown below.

Let $\mathcal I\subset\C[\underline{z}]$ be an ideal. Suppose there are two sets of homogeneous polynomials $\{p_1,\ldots,p_v\}$ and $\{\tilde p_1,\ldots,\tilde p_v\}$ both of which are minimal set of generators for $\mathcal I$.  Theorem \ref{hi} guarantees the existence of a new set of generators $\{q_1,\ldots,q_v\}$ and $\{\tilde q_1,\ldots,\tilde q_v\}$ corresponding to each of these generating sets with additional properties which ensures that the equality
$$
[\tilde q_i^*]= \sum_{j=1}^v\alpha_{ij}[q_j^*],\,1\leq i\leq v
$$
holds in $\tilde{\mathbb V}_0(\mathcal I)/\mathbb V_0(\mathcal I)$ for some choice of complex constants $\alpha_{ij}$, $1\leq i,j\leq v$. Therefore $\tilde q_i^* - \sum_{i=1}^v\bar\alpha_{ij}q_j^*\in {\mathbb V}_{0}(\mathcal I)$. Since $\tilde q_i - \sum_{i=1}^v\alpha_{ij}q_j$ is in $\mathcal I,$ we have
$$
0 = \big ( (\tilde q_i^* - \sum_{i=1}^v\bar\alpha_{ij}q_j^* )(D) \big )\big (\tilde q_i - \sum_{i=1}^v\alpha_{ij}q_j\big ) = \parallel \tilde q_i - \sum_{i=1}^v\alpha_{ij}q_j\parallel^2_0,\,\, 1\leq i\leq v,
$$ 
and hence $\tilde q_i= \sum_{i=1}^v\alpha_{ij}q_j$, $1\leq i\leq v$. We have therefore proved the following.
\begin{prop}\label{vn}
Let $\mathcal I\subset \C[\underline{z}]$ be a homogeneous ideal.  If $\{q_1, \ldots, q_v\}$ is a minimal set of generators for $\mathcal I$ with the property that $\{[q_i^*]:\,1\leq i \leq v\}$ is a basis for $\tilde{\mathbb V}_0(\mathcal I)/\mathbb V_0(\mathcal I)$,  then $q_1, \ldots, q_v$ is unique up to a linear transformation.
\end{prop}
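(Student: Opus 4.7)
The plan is to reduce the uniqueness statement to a standard argument involving the Fock inner product at $0$, exactly in the spirit of the first proof of Lemma \ref{sc}. Suppose that $\{q_1,\ldots,q_v\}$ and $\{\tilde q_1,\ldots,\tilde q_v\}$ are two minimal sets of generators of $\mathcal I$ both of which have the stated property, namely, that $\{[q_i^*]\}_{i=1}^v$ and $\{[\tilde q_i^*]\}_{i=1}^v$ are each bases of $\tilde{\mathbb V}_0(\mathcal I)/\mathbb V_0(\mathcal I)$. The first thing I would do is to extract complex scalars $\alpha_{ij}$ from the coincidence of these two bases: the equality
$$
[\tilde q_i^*] ~=~ \sum_{j=1}^v \alpha_{ij}[q_j^*]
$$
in the quotient space $\tilde{\mathbb V}_0(\mathcal I)/\mathbb V_0(\mathcal I)$ is forced purely by linear algebra.

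Next, I would translate this equation into an assertion in $\C[\underline z]$: by the very definition of the quotient, the polynomial $h_i := \tilde q_i^* - \sum_{j=1}^v \bar\alpha_{ij} q_j^*$ lies in $\mathbb V_0(\mathcal I)$. The key move is to exploit both meanings of the element $\tilde q_i - \sum_j \alpha_{ij} q_j$: on one hand it is a $\C$-linear combination of generators, hence itself a member of the ideal $\mathcal I$; on the other hand, its complex-conjugate polynomial $h_i$ acts as a differential operator annihilating every element of $\mathcal I$ at the origin. Applying $h_i(D)$ to the polynomial $\tilde q_i - \sum_j \alpha_{ij} q_j \in \mathcal I$ and recognising the result as the Fock inner product, one gets
$$
0 ~=~ h_i(D)\!\left(\tilde q_i - \sum_{j=1}^v \alpha_{ij}q_j\right)\!\Big|_{0} ~=~ \big\langle \tilde q_i - \textstyle\sum_{j=1}^v \alpha_{ij}q_j,\, \tilde q_i - \sum_{j=1}^v \alpha_{ij}q_j\big\rangle_0 ~=~ \Big\| \tilde q_i - \sum_{j=1}^v \alpha_{ij}q_j\Big\|_0^2.
$$
Since $\langle\cdot,\cdot\rangle_0$ is positive definite on $\C[\underline z]$, this forces $\tilde q_i = \sum_{j=1}^v \alpha_{ij} q_j$ for each $i$, which is precisely the claimed linear transformation relating the two sets of generators. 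By symmetry (or because $v = \dim \tilde{\mathbb V}_0(\mathcal I)/\mathbb V_0(\mathcal I)$ and both sets descend to bases of this $v$-dimensional quotient), the matrix $(\alpha_{ij})$ must be invertible.

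There is no serious obstacle here; the only point that requires care is the double role played by $\tilde q_i - \sum_j \alpha_{ij}q_j$ and its conjugate, and the fact that one must invoke homogeneity (already used in Theorem \ref{hi}) only implicitly, through the hypothesis that such canonical generating sets exist. The argument is essentially a packaging of the observation already made before the statement of the proposition, so I would expect the write-up to be short and self-contained once the Fock-inner-product identity is displayed.
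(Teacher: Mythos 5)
Your argument is correct and is essentially the paper's own: the authors likewise set $\alpha_{ij}$ by comparing the two bases of $\tilde{\mathbb V}_0(\mathcal I)/\mathbb V_0(\mathcal I)$, observe that $\tilde q_i^* - \sum_j \bar\alpha_{ij}q_j^*\in\mathbb V_0(\mathcal I)$ while $\tilde q_i - \sum_j \alpha_{ij}q_j\in\mathcal I$, and kill the latter by applying the former as a differential operator, recognizing the result as $\|\tilde q_i - \sum_j \alpha_{ij}q_j\|_0^2 = 0$. (One minor slip, shared with the paper's write-up: if you set $[\tilde q_i^*]=\sum_j\alpha_{ij}[q_j^*]$, then $\tilde q_i^*-\sum_j\alpha_{ij}q_j^*\in\mathbb V_0(\mathcal I)$ without the bars, so the coefficients should be relabeled $\alpha_{ij}\mapsto\bar\alpha_{ij}$ for the later steps to line up; this is cosmetic and does not affect the conclusion.)
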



We end this section with the explicit calculation of the joint kernel for a class of submodules of the Hardy module which illustrate the methods of Proposition \ref{gen}.

\begin{ex}\label{ke} Let $p_1, p_2$ be the minimal set of generators for an ideal $\mathcal I \subseteq \mathbb C[z_1,z_2]$.  Assume that $p_1,p_2$ are homogeneous, $\deg p_2 = \deg p_1 +1$ and $V(\mathcal I) =\{0\}$. As in Proposition \ref{gen}, set $q_1=p_1$ and $q_2 = p_2 + (\gamma_{10}z_1 + \gamma_{01} z_2) p_1$ subject to the equations
 \begin{eqnarray} \label{Meq}\left(\begin{array}{ll}
\parallel\partial_1 p_1\parallel_0^2+\parallel p_1\parallel^2_0 & \langle \partial_2 p_1,\partial_1 p_1\rangle_0 \\
\langle \partial_1 p_1,\partial_2 p_1\rangle_0 & \parallel\partial_2 p_1\parallel_0^2+\parallel p_1\parallel^2_0   \end{array}\right) \left(\begin{array}{l}
 \gamma_{10}\\
\gamma_{01}
  \end{array}\right)
= - \left(\begin{array}{l}
 \langle p_1, \partial_1
p_2\rangle_0\\ \langle p_1,
\partial_2 p_2\rangle_0
  \end{array}\right)
\end{eqnarray}  
In this special case, the invertibility of the coefficient matrix follows from the positivity (Cauchy -  Schwarz inequality) of its  determinant  
\beqa \lefteqn{\parallel p_1\parallel^4_0+
\parallel\partial_1 p_1\parallel_0^2\parallel p_1\parallel^2_0+\parallel\partial_2 p_1\parallel_0^2\parallel p_1\parallel^2_0}\\ && \phantom{AAAAAAAAAA}+ (\parallel \partial_1 p_1\parallel_0^2 \parallel \partial_2 p_1\parallel_0^2 - |\langle \partial_1 p_1,\partial_2 p_1\rangle_0|^2).
\eeqa

Specifically, if the ideal  $\mathcal I\subset \C[z_1,z_2]$ is generated by $z_1+z_2$ and $z_2^2$. We have $V(\mathcal I)=\{0\}$. The reproducing kernel $K$ for $[\mathcal I]\subseteq H^2(\mathbb D^2)$ is 
\begin{eqnarray*}
K_{[\mathcal I]}(z,w)&=&\frac{1}{(1-z_1\bar w_1)(1-z_2\bar
w_2)}-\frac{(z_1-z_2)(\bar w_1 - \bar w_2)}{ 2} - 1\\
&=&\frac{(z_1 + z_2)(\bar w_1 + \bar w_2)}{ 2}+\d_{i+j\geq2}^\infty
z_1^iz_2^j\bar w_1^i\bar w_2^j.
\end{eqnarray*} 
The vector $\bar\partial_2^2K_{[\mathcal I]}(z,w)|_0= 2z_2^2$ is not in the joint kernel of $P_{[\mathcal I]}(M_1^*,M_2^*)|_{[\mathcal I]}$ since 
$M_2^*(z_2^2)= z_2$ and $P_{[\mathcal I]} z_2 = (z_1 + z_2)/2 \neq 0$. However, from the equation \eqref{Meq},  
we have $q_1=z_1+z_2$ and $q_2 = (z_1 - z_2)^2$,  we see that $q_1, q_2$ generate the ideal $\mathcal I$ and
$\{(\bar\partial_1+\bar\partial_2)K(\cdot,w)|_0,(\bar\partial_1-\bar\partial_2)^2K(\cdot,w)|_0\}$ forms a basis of the joint kernel. 

\end{ex}

\noindent\textbf{Remark on Example \ref{ke}.}
Let  $\wt{\mathcal I}$ be the ideal generated by $z_1$ and $z_2^2$. Since $z_1$ is not a linear combination of $q_1$ and $q_2$, it follows (Proposition \ref{vn}) that $\mathcal I\neq \wt{\mathcal I}$.  In fact Proposition \ref{vn} gives an effective tool to determine when a homogeneous ideal is monoidal. Let $\{q_1, \ldots, q_v\}$ be a canonical set of generators for $\mathcal I$. Let $\Lambda$ be the collection of monomials in the expressions of $\{q_1, \ldots, q_v\}$. If the number of algebraically independent monomials in $\Lambda$ is $v$, then $\mathcal I$ is monoidal.

\begin{rem}\label{ex}
If the generators of the ideal are not homogeneous then the conclusion of the theorem \ref{hi} is not valid. Take the ideal  $\mathcal I\subset \C[z_1,z_2]$ generated by $z_1(1+z_1),z_1(1-z_2),z_2^2$ which is also minimal for $\mathcal I$. We have $V(\mathcal I)=\{0\}$. We note that the stalk $\mathcal S^{\mathcal M}_0$ at $0$ is generated by $z_1$ and $z_2^2$. Similar calculations, as above, shows that $\{\bar\partial _1K(\cdot,w)|_0,\bar\partial_2^2K(\cdot,w)|_0\}$ is a basis
of $ \displaystyle\cap_{j=1}^2 \ker M_j^*$. But $z_1$ and $z_2^2$ can not be a set of generators for $\mathcal I\subset\C[z_1,z_2]$ which has rank $3$. On the other hand, let $\mathcal I$ be the ideal generated by $z_1+z_2+z_1^2, z_2^3-z_1^2$ which is minimal and $V(\mathcal I)=\{0\}$.  In this case $\{(\bar\partial_1+\bar\partial_2)K(\cdot,w)|_0,(\bar\partial_1-\bar\partial_2)^2K(\cdot,w)|_0\}$ is a basis
of $\cap_{j=1,2} \ker M_j^*$. But $z_1+z_2$ and $(z_1-z_2)^2$ is not a  generating set for the stalk at $0$.
\end{rem}

\section{Resolution of Singularities} \label{res}
We will use the familiar technique of `resolution of singularities'  and construct the blow-up space of $\Omega$ along an ideal $\mathcal I$, which we will denote by $\hat \Omega$. There is a map $\pi:\hat\Omega\ra\Omega$ which is biholomorphic on $\hat\Omega\setminus \pi^{-1}(V(\mathcal I))$. However, in general, $\hat\Omega$ need not even be a complex manifold. Abstractly, the inverse image sheaf of ${\mathcal S}^{\mathcal M}$ under $\pi$ is locally principal and therefore corresponds to a line bundle on $\hat\Omega$. Here, we explicitly construct a holomorphic line bundle, via the monoidal transformation, on $\pi^{-1}(w_0),\,w_0\in V(\mathcal I),$ and show that the equivalence class of these Hermitian holomorphic vector bundles are invariants for the Hilbert module $\mathcal M$.

In the paper \cite{dmv}, submodules of functions vanishing at the origin of $H^{(\lambda, \mu)}(\mathbb D^2)$ were studied using the blow-up $\mathbb D^2 \setminus (0,0) \cup \mathbb P^1$ of the bi-disc.  This is also known as the quadratic transform.  However, this technique yields useful information only if the generators of the submodule are homogeneous polynomials of same degree. The monoidal transform, as we will see below, has wider applicability.

For any two Hilbert module  $\mathcal M_1$ and $\mathcal M_2$ in the class $\mathcal B_1(\Omega)$ and $L: \mathcal M_1\ra \mathcal M_2$ a module map between them, let $\mathcal S^L: \mathcal S^{\mathcal M_1}(V)\ra \mathcal S^{\mathcal M_2}(V)$ be the map defined by
$$
\mathcal S^L{\sum_{i=1}^nf_i|_Vg_i}:=\sum_{i=1}^nLf_i|_Vg_i, \mbox{~for~} f_i\in\mathcal M_1, g_i\in\mathcal O(V),\,n\in\mathbb N.
$$
The map $\mathcal S^L$ is well defined: if  $\sum_{i=1}^nf_i|_Vg_i=\sum_{i=1}^n\tilde f_i|_V\tilde g_i$, then $\sum_{i=1}^nLf_i|_Vg_i=\sum_{i=1}^nL\tilde f_i|_V\tilde g_i$. Suppose $\mathcal M_1$ is isomorphic to $\mathcal M_2$ via the unitary module map $L$. Now, it is easy to verify that $(\mathcal S^{L})^{-1}=\mathcal S^{L^*}$.  It then follows that $\mathcal S^{M_1}$ is isomorphic, as sheaves of modules over $\mathcal O_{\Omega}$, to $\mathcal S^{M_2}$ via the map  $\mathcal S^{L}$.

Let $K_i$ be the reproducing kernel corresponding to $\mathcal M_i,\, i=1,2$. 
We assume that the dimension of the zero sets $X_i=Z(\mathcal M_i)$ of the modules $\mathcal M_i,\,i=1,2,$ is less or equal to $m-2$. Recall that the stalk $\mathcal S^{\mathcal M_i}_{w}$ is $\mathcal O_{w}$ for $w\in\Omega\setminus X_1,\,i=1,2$. Let $X=X_1\cup X_2$. From \cite[Lemma 1.3]{bmp} and \cite[Theorem 3.7]{cs}, it follows that there exists a non-vanishing holomorphic function $\phi:\Omega \setminus X \ra \C$
such that $LK_1(\cdot,w)= \bar\phi(w)K_2(\cdot,w),~ L^*f=\phi f$ and $K_1(z,w) = \phi(z)K_2(z,w)\bar\phi(w)$.
The function $\psi=1/\phi$ on $\Omega\setminus X$ (induced by the inverse of $L$, that is, $L^*)$ is holomorphic. Since $\dim X\leq m-2$, by Hartog's theorem (cf. \cite[Page 198]{loj})
there is a unique extension of $\phi$ to $\Omega$ such that $\phi$ is non-vanishing on $\Omega$ ($\psi$ have an extension to $\Omega$ and $\phi\psi=1$ on the open set $\Omega\setminus X$). Thus $X_1=X_2$. For
$w_0\in X$,  the stalks are not just isomorphic but equal:
\begin{eqnarray*}\mathcal S^{\mathcal M_1}_{w_0}&=& \{\d_{i=1}^nh_ig_i : g_i\in\mathcal M_1, h_i\in {_m\mathcal O_{w_0}},
1\leq i\leq n, n\in\N\}\\ &=& \{\d_{i=1}^nh_i\phi f_i : f_i\in\mathcal M_2, h_i\in {_m\mathcal O_{w_0}}, 1\leq
i\leq n, n\in\N\}\\ &=& \{\d_{i=1}^n\tilde h_if_i : f_i\in\mathcal M_2, \tilde h_i\in {_m\mathcal O_{w_0}},
1\leq i\leq n, n\in\N\}=\mathcal S^{\mathcal M_2}_{w_0}.\end{eqnarray*}

The following theorem is modeled after the well known rigidity theorem which is obtained by taking $\mathcal M = \wt{\mathcal M}$. The proof below is different from the ones in \cite{cg} or \cite{dpsy} and uses the techniques developed in this paper and in \cite{bmp}. We note the conditions in \cite[Theorem 3.6]{dpsy} are same as the following theorem, as dimension of the algebraic variety $V(\mathcal I)$ for some ideal $\mathcal I\subset \C[\underline z]$ is same as the holomorphic dimension by \cite[Theorem 5.7.1]{tay}.

\begin{thm}
Let $\mathcal M$ and $\wt{\mathcal
M}$ be two Hilbert
modules in $\mathfrak B_1(\Omega^*)$ consisting of holomorphic functions on a bounded domain $\Omega\subset \C^m$. Assume that the dimension of the zero set of these modules is at most $m-2$. Suppose there exists polynomial ideals $\mathcal I$ and $\wt{\mathcal I}$ such that $\mathcal M = [\mathcal I]_{\mathcal M}$ and $\wt{\mathcal M} = [\wt{\mathcal I}]_{\wt{\mathcal M}}$. Assume that every algebraic component of $V(\mathcal I)$ and $V(\wt{\mathcal I})$ intersects $\Omega$. Then $\mathcal M$ and $\wt{\mathcal
M}$ are equivalent if and only if $\mathcal I = \wt{\mathcal I}$.
\end{thm}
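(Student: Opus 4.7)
The plan is to establish the substantive ``only if'' direction in detail; the converse is essentially immediate from the construction. Throughout I would rely heavily on the sheaf/characteristic-space machinery already developed in the excerpt.

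First, I would invoke the analysis carried out immediately before the theorem. Assume a unitary Hilbert-module isomorphism $L:\mathcal M\to\wt{\mathcal M}$. Because the zero sets $X_1$ and $X_2$ of the two modules have complex codimension at least two, that analysis (using Hartogs's theorem on $\Omega\setminus X$) produces a nowhere vanishing holomorphic function $\phi:\Omega\to\C^*$ with $L^*f=\phi f$ and the kernel identity $K_{\mathcal M}(z,w)=\phi(z)K_{\wt{\mathcal M}}(z,w)\overline{\phi(w)}$. The same argument already gives the stalkwise equality $\mathcal S^{\mathcal M}_{w_0}=\mathcal S^{\wt{\mathcal M}}_{w_0}$ for every $w_0\in\Omega$; in particular, the characteristic spaces coincide, $\mathbb V_{w_0}(\mathcal S^{\mathcal M})=\mathbb V_{w_0}(\mathcal S^{\wt{\mathcal M}})$.

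Second, I would translate the stalk equality into an equality of ideal localizations: $\mathcal I\cdot\mathcal O_{w_0}=\wt{\mathcal I}\cdot\mathcal O_{w_0}$ for every $w_0\in\Omega$. The inclusion $\mathcal I\cdot\mathcal O_{w_0}\subseteq\mathcal S^{\mathcal M}_{w_0}$ is immediate from $\mathcal I\subseteq\mathcal M$. For the reverse inclusion I would exploit the identities $\mathcal M\cap\C[\underline z]=\mathcal I$ and $\mathcal M^e_{w_0}=[\mathcal I^e_{w_0}]=\mathcal M$ recorded in \eqref{obs1}, together with a Noetherian argument in $\mathcal O_{w_0}$, to show that every germ in $\mathcal S^{\mathcal M}_{w_0}$ already lies in $\mathcal I\cdot\mathcal O_{w_0}$. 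Running the symmetric argument for $\wt{\mathcal I}$ and combining with Step~1 yields $\mathcal I\cdot\mathcal O_{w_0}=\wt{\mathcal I}\cdot\mathcal O_{w_0}$ for all $w_0\in\Omega$.

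Third, I would pass from the local statement to equality of polynomial ideals. The previous step first yields $V(\mathcal I)\cap\Omega=V(\wt{\mathcal I})\cap\Omega$; the hypothesis that every algebraic component of $V(\mathcal I)$ and $V(\wt{\mathcal I})$ meets $\Omega$ then upgrades this to $V(\mathcal I)=V(\wt{\mathcal I})$ as algebraic subsets of $\C^m$. For each $w$ in this common zero set, the envelope $\mathcal I^e_w$ is determined by $\mathbb V_w(\mathcal I)$, which coincides with $\mathbb V_w(\wt{\mathcal I})$ by Step~1, so $\mathcal I^e_w=\wt{\mathcal I}^e_w$. The Krull-intersection formula $\mathcal I=\bigcap_{w\in V(\mathcal I)}\mathcal I^e_w$ recalled from \cite{cg} then gives $\mathcal I=\wt{\mathcal I}$.

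The principal obstacle I anticipate lies in Step~2: a careful identification of the sheaf stalk with the localized ideal requires that the Hilbert-space closure $[\mathcal I]=\mathcal M$ does not enlarge $\mathcal I\cdot\mathcal O_{w_0}$ inside the local ring. I expect to handle this by replaying the finite-codimension / Fock-inner-product computation given in the excerpt for the special case $V(\mathcal I)=\{w\}$ and then patching across $\Omega$ using the envelope identity \eqref{obs1}. The ``if'' direction is the easier half: once the ideals agree, the same sheaf-theoretic correspondence together with a nowhere vanishing multiplier $\phi$ produced from the reproducing kernels yields the unitary module map $f\mapsto\phi^{-1}f$.
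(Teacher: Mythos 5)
Your Step~1 matches the paper: unitary equivalence plus the codimension-$\geq 2$ hypothesis gives, via Hartogs, a nowhere-vanishing multiplier $\phi$ and the stalkwise equality $\mathcal S^{\mathcal M}_{w_0}=\mathcal S^{\wt{\mathcal M}}_{w_0}$. After that, however, the proposal diverges and contains two genuine problems.

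First, your Step~2 is an unnecessary detour, and it is also the place you yourself flag as the ``principal obstacle'' without actually closing it. You try to prove the much stronger statement $\mathcal S^{\mathcal M}_{w_0}=\mathcal I\cdot\mathcal O_{w_0}$ (identification of the analytic stalk with the localized polynomial ideal), sketching a Noetherian/Fock-space patching argument that you do not carry out. The paper avoids this entirely: it only needs the identity $\mathbb V_{w_0}(\mathcal I)=\mathbb V_{w_0}(\mathcal S^{\mathcal M}_{w_0})$, which it cites from \cite[Lemmas 3.2 and 3.3]{bmp}. Combining this with the stalk equality immediately gives $\mathbb V_{w_0}(\mathcal I)=\mathbb V_{w_0}(\wt{\mathcal I})$, hence $\mathcal I^e_{w_0}=\wt{\mathcal I}^e_{w_0}$, for every $w_0\in\Omega$, with no localization claim about the stalk needed.

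Second, Step~3 has a gap in the last move. You establish $\mathcal I^e_w=\wt{\mathcal I}^e_w$ only for $w\in\Omega$, then invoke the identity $\mathcal I=\bigcap_{w\in V(\mathcal I)}\mathcal I^e_w$, whose index set $V(\mathcal I)$ may well extend beyond $\Omega$. Saying ``for each $w$ in this common zero set, $\mathbb V_w(\mathcal I)=\mathbb V_w(\wt{\mathcal I})$ by Step~1'' is not justified, since Step~1 is confined to $\Omega$. Closing this gap is exactly where the hypothesis that every algebraic component of $V(\mathcal I)$ and $V(\wt{\mathcal I})$ meets $\Omega$ does real work, and the paper handles it by citing \cite[Corollary 2.1.2]{cg}, which gives the stronger intersection formula $\mathcal I=\bigcap_{w\in\Omega}\mathcal I^e_w$ under precisely that component hypothesis. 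Your draft needs either to invoke this result in the same way, or to supply an argument that the envelope is determined along an irreducible component by its value at any single point of that component (so that knowing it on $\Omega$ suffices); as written, neither is present. The ``if'' direction is indeed easy, but your sketch assumes a multiplier exists; in the intended setting this direction is trivial because $\mathcal I=\wt{\mathcal I}$ together with the given closure data forces $\mathcal M=\wt{\mathcal M}$.
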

\begin{proof}
For $w_0\in\Omega$, we have $\mathbb V_{w_0}(\mathcal I)=\mathbb V_{w_0}(\mathcal S^{\mathcal M}_{w_0})$ from \cite[Lemma 3.2 and 3.3]{bmp},
and $\mathcal S^{\mathcal M}_{w_0}=\mathcal S^{\wt{\mathcal M}}_{w_0}$. Therefore  $\mathbb V_{w_0}(\mathcal I) = \mathbb V_{w_0}(\wt{\mathcal I})$. In other words, setting $\mathcal I^e_{w_0} = \{p\in\C[\underline z]: q(D)p|_{w_0}=0 \mbox{~for~all~}q\in\mathbb V_{w_0}(\mathcal I)( = \mathbb V_{w_0}\}$, as in \cite{cg}, we see that $\mathcal I^e_{w_0} = {\wt{\mathcal I}}^e_{w_0}$ for all $w_0\in\Omega$. The proof is now complete since $
\mathcal I = \cap_{w_0\in\Omega}\mathcal I^e_{w_0}$ (cf. \cite[Corollary 2.1.2]{cg}).
\end{proof}
\begin{ex}
For $j=1,2$, let $\mathcal I_j\subset \C[z_1,\ldots , z_m]$, $m > 2$, be the ideals generated by $z_1^n$ and $z_1^{k_j}z_2^{n-k_j}$.  Let $[\mathcal I_j]$ be the submodule in the Hardy module $H^2(\mathbb D^m)$. Now, from the Theorem proved above, it follows that $[\mathcal I_1]$ is equivalent to $[\mathcal I_2]$ if and only if $\mathcal I_1 = \mathcal I_2$.  We conclude, using Proposition \ref{vn}, that these two ideals are same only if $k_1= k_2$.
\end{ex}

\subsection{\sf \sf The Monoidal Transformation}
Let $\mathcal M$ be a Hilbert module in $\mathfrak B_1(\Omega^*)$, which is the closure, in $\mathcal M$, of some polynomial ideal $\mathcal I$. Let $K$ denote the corresponding reproducing kernel.  Let $w_0\in Z(\mathcal M)$. Set 
$$
t=\dim\mathcal S^\mathcal M_{w_0}/\mathfrak
m_{w_0} \mathcal S^\mathcal M_{w_00} = \dim\cap_{j=1}^m\ker (M_j - w_{0j}) ^*= \dim \tilde{\mathbb V}_{w_0}(\mathcal I)/\mathbb V_{w_0}(\mathcal I).
$$ 
By the decomposition theorem \cite[Theorem 1.5]{bmp}, there exists a minimal set of generators $g_1,\cdots,g_t$ of $\mathcal S^{\mathcal M_1}_{0}$ and  a $r>0$ such that
\beq\label{dec}
K(\cdot, w)=\sum_{i=1}^t\ov{g_j(w)}K^{(j)}(\cdot,w) \mbox{~for~all~}w\in\Delta(w_0; r)
\eeq
for some choice of anti-holomorphic functions $K^{(1)}, \ldots , K^{(t)}:\Delta(w_0; r) \to\mathcal M$.

Assume  that $Z:= Z(g_1,\ldots, g_t)\cap \Omega$ be a singularity free analytic subset of $\C^m$ of codimension $t$. We point out that $Z$ depends on $\mathcal M$ as well as $w_0$. Define
$$
\widehat{\Delta}(w_0;r):= \{(w,\pi(u))\in \Delta(w_0;r)\times {\mathbb P}^{t-1}: u_ig_j(w) - u_jg_i(w)=0 ,\,1\leq i,j\leq t\}.
$$
Here the map $\pi:\C^t\setminus\{\underline 0\}\ra {\mathbb P}^{t-1}$ is given by $\pi(u) = (u_1:\ldots:u_t)$, the corresponding projective coordinate. The space $\widehat{\Delta}(w_0;r)$ is the monoidal transformation with center $Z$ (\cite[page 241]{fg}). Consider the map  $p:= \mbox{pr}_1: \widehat{\Delta}(w_0;r)\ra \Delta(w_0;r)$ given by $(w,\pi(z))\mapsto w$. For $w\in Z,$ we have $p^{-1}(w) = \{w\}\times {\mathbb P}^{t-1}$. This map is holomorphic and proper. Actually $p:\widehat{\Delta}(w_0;r)\setminus p^{-1}(Z)\ra \Delta(w_0;r)\setminus Z$ is biholomorphic with $p^{-1}: w\mapsto (w,  (g_1(w):\ldots:g_t(w)))$. The set $E(\mathcal M):= p^{-1}(Z)$ which is $Z\times {\mathbb P}^{t-1}$, is called the exceptional set.

We describe a natural line bundle on the blow-up space $\widehat{\Delta}(w_0;r)$. Consider the open set $U_1= (\Delta(w_0;r)\times\{u_1\neq 0\})\cap \widehat{\Delta}(w_0;r)$. Let $\frac{u_j}{u_1} = \theta^1_j,\,2\leq j\leq t$. On this chart $g_j(w) = \theta^1_jg_j(w)$. From the decomposition given in the equation \eqref{dec}, we have
$$
K(\cdot, w)=\ov{g_1(w)}\{K^{(1)}(\cdot,w) + \sum_{j=2}^t\bar\theta^1_jK^{(j)}(\cdot,w)\}.
$$
This decomposition then yields a section on the chart $U_1$, of the line bundle on the blow-up space $\widehat{\Delta}(w_0;r)$:
$$
s_1(w,\theta) = K^{(1)}(\cdot,w) + \sum_{j=2}^t\bar\theta^1_jK^{(j)}(\cdot,w).
$$
The vectors $ K^{(j)}(\cdot,w)$ are not uniquely determined. However, there exists a canonical choice of these vectors starting from a basis, $\{v_1,\ldots,v_t\}$, of the joint kernel $\cap_{i=1}^n\ker (M_j - w_j)^*$:
$$
K(\cdot,w)= \sum_{j=1}^t \ov{g_j(w)}P(\bar w, \bar w_0)v_j,\, w\in\Delta(w_0;r)
$$
for some $r>0$ and generators $g_1,\ldots, g_t$ of the stalk $\mathcal S^{\mathcal M}_{w_0}$. Thus we obtain the canonical choice $K^{(j)}(\cdot,w) = P(\bar w, \bar w_0)v_j,\, 1\leq j\leq t$ (cf. \cite[Section 6]{bmp}). Let $\mathcal L(\mathcal M)$ be the line bundle on the blow-up space $\widehat{\Delta}(w_0;r)$ determined by the section  $(w,\theta)\mapsto s_1(w,\theta)$, where
$$
s_1(w,\theta) = P(\bar w, \bar w_0)v_1+\sum_{j=2}^t \bar\theta^1_j P(\bar w, \bar w_0)v_j ,\, (w,\theta)\in U_1.
$$
Let $\wt{\mathcal M}$ be a second Hilbert module in $\mathfrak B_1(\Omega^*)$, which is the closure of the polynomial ideal ${\mathcal I}$ with respect to another inner product. Assume that  $\wt{\mathcal M}$ is equivalent to $\mathcal M$  via a unitary module map $L$. In the proof of Theorem 1.10 in \cite{bmp}, we have shown that $LP(\bar w, \bar w_0) ={\wt P}(\bar w, \bar w_0)L$. Thus
$$
\ov{\phi(w)}\wt K(\cdot,w) = LK(\cdot,w)= \sum_{j=1}^t \ov{g_j(w)}LP(\bar w, \bar w_0)v_j = \sum_{j=1}^t \ov{g_j(w)}\wt{P}(\bar w, \bar w_0)Lv_j.
$$
Therefore $s_1(w,\theta) = \frac{1}{\ov{\phi(w)}}(\wt{P}(\bar w, \bar w_0)Lv_1+\sum_{j=2}^t \bar\theta^1_j \wt{P}(\bar w, \bar w_0)Lv_j)$ and 
$$
Ls_1(w,\theta) = \ov{\phi(w)}\tilde s_1(w,\theta).
$$ 
Hence the line bundles $\mathcal L(\mathcal M)$ and $\mathcal L(\wt{\mathcal M})$ are equivalent as Hermitian holomorphic line bundle on $\widehat{\Delta}(w_0;r)^*=\{(\bar w,\pi(\bar u)): (w,\pi(u))\in\widehat{\Delta}(w_0;r)\}$. Since $K^{(j)}(\cdot,w),1\leq j\leq t$ are linearly independent \cite[Theorem 1.5]{bmp}, it follows that $Z(\mathcal M)\cap\Delta(w_0;r) = Z$. Thus  if $w\in\Delta(w_0;r)\setminus Z$, then $g_i(w)\neq 0$ for some $i,\,1\leq i\leq t$. Hence $s_i(w,\theta)= \frac{k(\cdot,w)}{\ov {g_i(w)}}$ on $(\Delta(w_0;r)\times\{u_i\neq 0\})\cap \widehat{\Delta}(w_0;r)$. Therefore the restriction of the bundle $\mathcal L(\mathcal M)$ to $\widehat{\Delta}(w_0;r)\setminus p^{-1}(Z)$ is the pull back of the Cowen-Douglas bundle for $\mathcal M$ on $\Delta(w_0;r)\setminus Z$, via the biholomorphic map $\pi$ on $\widehat{\Delta}(w_0;r)\setminus p^{-1}(Z)$. we have therefore proved the following Theorem.

\begin{thm}\label{bu}
Let $\mathcal M$ and $\wt{\mathcal
M}$ be two Hilbert
modules in $\mathfrak B_1(\Omega)$ consisting of holomorphic functions on a bounded domain $\Omega\subset \C^m$. Assume that the dimension of the zero set of these modules is at most $m-2$. Suppose there exists a polynomial ideal $\mathcal I$ such that $\mathcal M$ and $\wt{\mathcal M}$ are the completions of $\mathcal I$ with respect to different inner product. Then $\mathcal M$ and $\wt{\mathcal M}$ are equivalent if and only if the line bundles $\mathcal L(\mathcal M)$ and $\mathcal L(\wt{\mathcal M})$ are equivalent as Hermitian holomorphic line bundle on $\widehat{\Delta}(w_0;r)^*$.
\end{thm}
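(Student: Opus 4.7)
The forward direction is essentially contained in the discussion immediately preceding the theorem; my plan for it is simply to collect those assertions into a clean paragraph. Given a unitary module map $L:\mathcal M\to\wt{\mathcal M}$, the intertwining identity $LP(\bar w,\bar w_0)=\wt P(\bar w,\bar w_0)L$ from the proof of Theorem 1.10 of \cite{bmp} lifts the canonical decomposition $K(\cdot,w)=\sum_j\ov{g_j(w)}P(\bar w,\bar w_0)v_j$ to the chart-wise identity $Ls_i(w,\theta)=\ov{\phi(w)}\tilde s_i(w,\theta)$ on each local trivialisation $U_i$ of $\widehat{\Delta}(w_0;r)$, which is precisely an isomorphism of Hermitian holomorphic line bundles $\mathcal L(\mathcal M)\to\mathcal L(\wt{\mathcal M})$ on $\widehat{\Delta}(w_0;r)^*$.

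For the converse I would proceed in three steps. First, I would restrict the given bundle isomorphism $\mathcal L(\mathcal M)\cong\mathcal L(\wt{\mathcal M})$ to the open subset $\widehat{\Delta}(w_0;r)^*\setminus p^{-1}(Z^*)$ on which $p$ is biholomorphic. The construction immediately above the theorem exhibits the restrictions of the two line bundles there as the pull-backs under $p$ of the Cowen--Douglas line bundles of $\mathcal M$ and $\wt{\mathcal M}$ on $\Delta(w_0;r)^*\setminus Z^*$. Pushing the assumed isomorphism forward by the biholomorphism $p$ therefore yields an isomorphism of the Cowen--Douglas line bundles of the two modules on the open subset $\Delta(w_0;r)^*\setminus Z^*$ of $\Omega_\mathcal I^*$.

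Second, I would globalise. By the Lemma of the introduction both $\mathcal M$ and $\wt{\mathcal M}$ belong to $\mathrm B_1(\Omega_\mathcal I^*)$, so their Cowen--Douglas line bundles are genuinely defined on the whole connected complex manifold $\Omega_\mathcal I^*$. The curvature form of such a line bundle is a real-analytic complete unitary invariant by the classical Cowen--Douglas theorem \cite{cd,cd1}; since the two curvatures already agree on the non-empty open set $\Delta(w_0;r)^*\setminus Z^*$, real-analyticity on a connected manifold forces them to agree on all of $\Omega_\mathcal I^*$. Thus the Cowen--Douglas bundles of $\mathcal M$ and $\wt{\mathcal M}$ are equivalent on $\Omega_\mathcal I^*$, and a final appeal to \cite{cd,cd1} produces a unitary $L:\mathcal M\to\wt{\mathcal M}$ intertwining $M_j$ with $\wt M_j$ for every $j$; such an $L$ automatically commutes with multiplication by every polynomial and is therefore a module map.

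I expect the main obstacle to be this globalisation step. The hypothesis delivers bundle equivalence only on a small blow-up neighbourhood $\widehat{\Delta}(w_0;r)^*$ of a single point $w_0$, while the conclusion is a global statement about Hilbert modules. The codimension hypothesis $\dim Z(\mathcal M)\leq m-2$ is precisely what keeps $\Omega_\mathcal I^*$ connected and permits the real-analytic continuation of the curvature to carry the local isomorphism to a global one; it also provides, via the Hartogs-type extension already exploited earlier in this section, the extension of the transition function $\phi$ across the singular set $Z$, an alternative route to the same globalisation should one prefer to argue at the level of reproducing kernels rather than curvatures.
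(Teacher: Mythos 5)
Your proof is correct and takes essentially the same approach as the paper: the forward direction collects the chart-wise identity $Ls_1(w,\theta)=\ov{\phi(w)}\tilde s_1(w,\theta)$ derived just before the theorem, and the converse exploits the paper's own observation that the restriction of $\mathcal L(\mathcal M)$ to $\widehat{\Delta}(w_0;r)^*\setminus p^{-1}(Z^*)$ is the pull-back under $p$ of the Cowen--Douglas line bundle of $\mathcal M$, after which Cowen--Douglas rigidity finishes the argument. One small correction to your closing remark: the complement of a proper analytic subset of a connected domain is already connected whatever its codimension, so the hypothesis $\dim Z(\mathcal M)\leq m-2$ is needed not for connectedness of $\Omega_{\mathcal I}^*$ but for the Hartogs-type extension of the transition function $\phi$ non-vanishingly across $Z$.
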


Although in general, $Z$ need not be a complex manifold, The restriction of $s_1$ to $p^{-1}(w_0)$ for $w_0\in Z$ determines a holomorphic line bundle on $p^{-1}(w_0)^*:=\{(w_0,\pi(\bar u)): (\bar w_0,\pi(u))\in p^{-1}(w_0)\}$, which we denote by $\mathcal  L_0(\mathcal M)$. Thus $s_1 = s_1(w,\theta)|_{\{w_0\}\times \{u_i\neq 0\}}$ is given by the formula
$$
s_1(\theta) = K^{(1)}(\cdot,w_0) + \sum_{j=2}^t\bar\theta^1_jK^{(j)}(\cdot,w_0).
$$
Since the vectors $K^{(j)}(\cdot,w_0),1\leq j\leq t$ are uniquely determined by the generators $g_1,\ldots,g_t$, $s_1$ is well defined.

\begin{thm} \label{proj}
Let $\mathcal M$ and $\wt{\mathcal
M}$ be two Hilbert
modules in $\mathfrak B_1(\Omega)$ consisting of holomorphic functions on a bounded domain $\Omega\subset \C^m$. Assume that the dimension of the zero set of these modules is at most $\leq m-2$. Suppose there exists a polynomial ideal $\mathcal I$  such that $\mathcal M$ and $\wt{\mathcal M}$ are the completions of $\mathcal I$ with respect to different inner product.  If the
modules $\mathcal M$ and $\wt{\mathcal M}$ are equivalent, then the
corresponding bundles $\mathcal L_0(\mathcal M)$ and $\mathcal L_0(\wt{\mathcal M})$ they determine on the projective space $p^{-1}(w_0)^*$ for $w_0\in Z$, are equivalent as Hermitian holomorphic line bundle.
\end{thm}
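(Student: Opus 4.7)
The proposal is to deduce Theorem \ref{proj} by restricting the bundle equivalence already produced globally on the blow-up in Theorem \ref{bu} to the exceptional fiber $p^{-1}(w_0)^*$. Since Theorem \ref{bu} supplies a Hermitian holomorphic isomorphism $\mathcal L(\mathcal M)\simeq \mathcal L(\wt{\mathcal M})$ on all of $\widehat{\Delta}(w_0;r)^*$, and $p^{-1}(w_0)^*$ is a closed complex submanifold of this blow-up, the fiberwise conclusion ought to follow by pull-back, provided the bundle transition data restrict consistently to the fiber.

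In detail, assume $L:\mathcal M\to\wt{\mathcal M}$ is a unitary module map. The discussion preceding the rigidity theorem, combined with the Hartogs extension theorem (applicable because $\dim Z(\mathcal M)\leq m-2$), produces a nowhere-vanishing holomorphic function $\phi$ on $\Omega$ with $LK(\cdot,w)=\ov{\phi(w)}\wt K(\cdot,w)$; in particular $\phi(w_0)\neq 0$. Combining this with the intertwining identity $LP(\bar w,\bar w_0)=\wt P(\bar w,\bar w_0)L$ from \cite{bmp}, and using the canonical representation $K^{(j)}(\cdot,w)=P(\bar w,\bar w_0)v_j$, the proof of Theorem \ref{bu} yields
$$
L\,s_1(w,\theta)\;=\;\ov{\phi(w)}\,\tilde s_1(w,\theta)
$$
on the chart $U_1$ of $\widehat{\Delta}(w_0;r)^*$.

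The key step is then to specialize to $w=w_0$. Since $P(\bar w_0,\bar w_0)$ is the identity on the joint kernel, $K^{(j)}(\cdot,w_0)=v_j$, and the defining section of $\mathcal L_0(\mathcal M)$ on the chart $\{w_0\}\times\{u_1\neq 0\}$ of $p^{-1}(w_0)^*\cong\mathbb P^{t-1}$ is $s_1(\theta)=v_1+\sum_{j=2}^t\bar\theta^1_j v_j$, with the analogous expression for $\tilde s_1(\theta)$ in terms of $Lv_1,\ldots,Lv_t$. The displayed identity now restricts to
$$
L\,s_1(\theta)\;=\;\ov{\phi(w_0)}\,\tilde s_1(\theta),
$$
which, because $\ov{\phi(w_0)}$ is a nonzero scalar and $L$ is unitary, exhibits a fiberwise, holomorphic, Hermitian isomorphism $\mathcal L_0(\mathcal M)\simeq \mathcal L_0(\wt{\mathcal M})$ over $p^{-1}(w_0)^*$.

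The only real subtlety is to verify that $\mathcal L_0(\mathcal M)$ is intrinsically well defined independently of the chart $U_i$ and of the particular minimal generators $g_1,\ldots,g_t$ used to produce $s_1$; this, however, is handled by the same cocycle/transition-function argument already used for $\mathcal L(\mathcal M)$ on the blow-up in Theorem \ref{bu}, since restricting those transition functions to the fiber $\{w_0\}\times\mathbb P^{t-1}$ gives holomorphic transitions among the $s_i$. No new obstacle beyond what was settled in Theorem \ref{bu} is anticipated.
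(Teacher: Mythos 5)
Your overall strategy is the right one and in fact tracks the paper's proof quite closely: apply $L$ to the decomposition of $K(\cdot,w)$, use $LP(\bar w,\bar w_0)=\wt P(\bar w,\bar w_0)L$ and the Hartogs extension of $\phi$, obtain $Ls_1=\ov{\phi}\,\tilde s_1$, and conclude by unitarity of $L$ that the norms of the two fiber sections differ only by the constant $|\phi(w_0)|^2$. But there is one genuine gap in how you pass from the global identity to the fiber.

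You assert that the defining section of $\mathcal L_0(\wt{\mathcal M})$ is ``the analogous expression for $\tilde s_1(\theta)$ in terms of $Lv_1,\ldots,Lv_t$.'' That is precisely what has to be proved, not assumed. The bundle $\mathcal L_0(\wt{\mathcal M})$ is defined intrinsically from the decomposition theorem applied to $\wt K$ with the fixed stalk generators $g_1,\ldots,g_t$: its section is $\tilde s_1(\theta)=\wt K^{(1)}(\cdot,w_0)+\sum_{j\geq 2}\bar\theta^1_j\wt K^{(j)}(\cdot,w_0)$ where the $\wt K^{(j)}$ come from the decomposition of $\wt K$. Applying $L$ to the decomposition of $K$ only shows that $\wt K^{(j)}(\cdot,w)=\frac{LK^{(j)}(\cdot,w)}{\ov{\phi(w)}}$ is \emph{one admissible} choice in a decomposition of $\wt K$, but in general the functions $K^{(j)}(\cdot,w)$ in the decomposition theorem are not unique for $w\neq w_0$. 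What makes the argument go through is the uniqueness clause, part (iii) of the decomposition theorem in \cite{bmp}, which says that the values $\wt K^{(j)}(\cdot,w_0)$ at the base point are uniquely determined by the generators $g_1,\ldots,g_t$; hence the $L$-transported choice and the intrinsic one must agree at $w_0$, giving $\wt K^{(j)}(\cdot,w_0)=\frac{Lv_j}{\ov{\phi(w_0)}}$ and therefore $Ls_1(\theta)=\ov{\phi(w_0)}\tilde s_1(\theta)$. This is exactly the step the paper makes explicit in its proof. Your concluding remark identifies the ``only real subtlety'' as chart-cocycle well-definedness, but that is a separate and lesser concern; the crucial input you are silently using is the $w_0$-uniqueness of the $\wt K^{(j)}$, and without invoking it the identification of the restricted section with the intrinsic section of $\mathcal L_0(\wt{\mathcal M})$ is not justified.
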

\begin{proof} Let $L:\mathcal M\ra \wt{\mathcal M}$ be the unitary module map and $K$ and $\wt K$ be the reproducing kernels corresponding to $\mathcal M$ and $\wt{\mathcal M}$ respectively. The existence of a holomorphic function $\phi$ on $\Omega\setminus Z(\mathcal M)$ such that $LK(\cdot,w)= \ov{\phi(w)}\wt K(\cdot,w),~ L^*f=\phi f$ and $K(z,w) = \phi(z)\wt K(z,w)\ov{\phi(w)}$ follows from  \cite[Lemma 1.3]{bmp} and \cite[Theorem 3.7]{cs}. As we have pointed earlier,  $\phi$ extends to a non-vanishing holomorphic function on $\Omega$.

Since $\mathcal M$ is in $\mathfrak B_1(\Omega^*)$, it admits a decomposition as given in equation \eqref{dec}, with respect the generators $\tilde g_1,\ldots,\tilde g_t$ of $\mathcal S^{\wt{\mathcal M}}_{w_0}$. However, we may assume that $\tilde g_i = g_i$ for $1\leq i\leq t$, because $\mathcal S^{\mathcal M}_{w_0}=\mathcal S^{\wt{\mathcal M}}_{w_0}$ for all $w_0\in\Omega$. Thus
\beqa
\wt K(\cdot, w)=\sum_{i=1}^t\ov{g_j(w)}\wt K^{(j)}(\cdot,w) \mbox{~for~all~}w\in\Delta(w_0; r)
\eeqa
For some $r>0$. By applying the unitary $L$ to equation \eqref{dec}, we get 
$$
\ov{\phi(w)}\wt K(\cdot, w) = LK(\cdot,w)~=~ \sum_{i=1}^t\ov{g_j(w)}LK^{(j)}(\cdot,w).
$$ 
Since $\phi$ does not vanish on $\Omega$, we may choose
$$
\wt K^{(j)}(\cdot,w) = \frac{LK^{(j)}(\cdot,w)}{\ov{\phi(w)}},\, 1\leq j\leq t,\, w\in \Delta(w_0; r).
$$
From part (iii) of the decomposition theorem (\cite[Theorem 1.5]{bmp}),  the vectors $\wt K^{(j)}(\cdot,w_0),\,1\leq j\leq t$ are uniquely determined by the generators $g_1,\ldots,g_t$. Therefore $\wt K^{(j)}(\cdot,w_0) = \frac{LK^{(j)}(\cdot,w_0)}{\ov{\phi(w_0)}}$. Now the decomposition for $\wt K$ yields a holomorphic section ${\tilde s}_1(\theta) = \wt K^{(1)}(\cdot,w_0) + \sum_{j=2}^t\theta^1_j\wt K^{(j)}(\cdot,w_0)$ for the holomorphic line bundle $\mathcal L_{0}(\wt{\mathcal M})$ on the projective space $p^{-1}(w_0)^*$. Therefore
\beqa
Ls_1(\theta) &=& LK^{(1)}(\cdot,w_0) + \sum_{j=2}^t\bar\theta^1_jLK^{(j)}(\cdot,w_0)\\ &=& \ov{\phi(w_0)}\{\wt K^{(1)}(\cdot,w_0) + \sum_{j=2}^t\bar\theta^1_j\wt K^{(j)}(\cdot,w_0)\}~=~ \ov{\phi(w_0)}{\tilde s}_1(\theta).
\eeqa
From the unitarity of $L$, it follows that 
\beq\label{sec}\parallel s_1(\theta)\parallel^2 = \parallel Ls_1(\theta)\parallel^2 = |\phi(w_0)|^2\parallel {\tilde s}_1(\theta)\parallel^2
\eeq and consequently the Hermitian holomorphic line bundles $\mathcal L_0(\mathcal M)$ and $\mathcal L_0(\wt{\mathcal M})$ on the projective space $p^{-1}(w_0)^*$ are equivalent.
\end{proof}

The existence of the polynomials $q_1,...,q_t$ such that $K^{(j)}(\cdot,w)|_{w=w_0}= q^*_j(\bar D)K(\cdot,w)|_{w=w_0},\, 1\leq j\leq t$, is guaranteed by Lemma \ref{nice}. The following Lemma shows that $$
\wt K^{(j)}(\cdot,w)|_{w=w_0}= q_j^*(\bar D)\wt K(\cdot,w)|_{w=w_0},\,1\leq j\leq t
$$
which makes it possible to calculate the section for the line bundles $\mathcal L_0(\mathcal M)$ and $\mathcal L_0(\wt{\mathcal M})$ without any explicit reference to the generators of the stalks at $w_0$.

\begin{lem}
Let $\mathcal I$ be a polynomial ideal with dim $V(\mathcal I)\leq m-2$ and $K$ be the reproducing kernel of $[\mathcal I]$ which is assumed to be in $\mathfrak B_1(\Omega^*)$. Let $q_1,...,q_t$ be the polynomials such that $K^{(j)}(\cdot,w)|_{w=w_0}= q^*_j(\bar D)K(\cdot,w)|_{w=w_0}$. Let $\wt K$ be a reproducing kernel of $[\mathcal I]$, completed with respect to another inner product. Then $\wt K^{(j)}(\cdot,w)|_{w=w_0}= q^*_j(\bar D)\wt K(\cdot,w)|_{w=w_0}$.
\end{lem}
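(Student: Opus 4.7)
The plan is to use the decomposition formula highlighted in the Remark after Lemma \ref{nice}, together with the observation that the stalks $\mathcal S^{\mathcal M}_{w_0}$ and $\mathcal S^{\wt{\mathcal M}}_{w_0}$ coincide: both equal the $\mathcal O_{w_0}$-module generated by germs at $w_0$ of polynomials in the common ideal $\mathcal I$. Because the stalks are literally equal, a single minimal set of generators $g_1,\ldots,g_t$ can drive the decomposition of both reproducing kernels, and this is the engine of the argument.

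Concretely, I would first invoke the decomposition theorem to obtain, in a polydisc centered at $w_0$,
\[
K(\cdot,w) = \sum_{l=1}^t \overline{g_l(w)}\, K^{(l)}(\cdot,w), \qquad \wt K(\cdot,w) = \sum_{l=1}^t \overline{g_l(w)}\, \wt K^{(l)}(\cdot,w).
\]
Since $[q_j]\in \tilde{\mathbb V}_{w_0}(\mathcal I)/\mathbb V_{w_0}(\mathcal I)$ by Lemma \ref{nice}, the Remark's formula applies to both decompositions and produces
\[
q_j^*(\bar D) K(\cdot,w)|_{w=w_0} = \sum_{l=1}^t \lambda_{jl}\, K^{(l)}(\cdot,w_0), \quad q_j^*(\bar D) \wt K(\cdot,w)|_{w=w_0} = \sum_{l=1}^t \lambda_{jl}\, \wt K^{(l)}(\cdot,w_0),
\]
where $\lambda_{jl} := q_j^*(\bar D)\, \overline{g_l(w)}|_{w=w_0}$. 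The crucial point is that $\lambda_{jl}$ is a scalar depending only on the polynomial $q_j$ and the germ $g_l$, so the very same scalars appear in both identities.

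The hypothesis $K^{(j)}(\cdot,w_0) = q_j^*(\bar D) K(\cdot,w)|_{w=w_0}$, combined with the first identity, yields $K^{(j)}(\cdot,w_0) = \sum_{l=1}^t \lambda_{jl}\, K^{(l)}(\cdot,w_0)$. Linear independence of $\{K^{(l)}(\cdot,w_0):1\leq l\leq t\}$, which is a basis of the joint kernel of $\mathcal M$, forces $\lambda_{jl} = \delta_{jl}$. Substituting this into the second identity immediately gives $q_j^*(\bar D)\wt K(\cdot,w)|_{w=w_0} = \wt K^{(j)}(\cdot,w_0)$, which is the desired conclusion.

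The main conceptual obstacle is justifying that the same generators $g_l$ govern both decompositions, which reduces to the equality $\mathcal S^{\mathcal M}_{w_0} = \mathcal S^{\wt{\mathcal M}}_{w_0}$ of stalks for completions of the same polynomial ideal with respect to different inner products. This is the ingredient that makes the scalars $\lambda_{jl}$ depend only on $\mathcal I$ and the polynomials $q_j$, not on the Hilbert norms. Once this is in place, the remainder is a coefficient-matching argument against the linearly independent vectors $K^{(l)}(\cdot,w_0)$.
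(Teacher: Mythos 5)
Your proposal is correct, and it takes a genuinely different route from the paper's own proof. The paper works through the unitary module map $L\colon\mathcal M\to\wt{\mathcal M}$: it first records the intertwining $q^*(\bar D)\,L K(\cdot,w)=L\,q^*(\bar D)K(\cdot,w)$, applies $L$ to the hypothesis $K^{(j)}(\cdot,w_0)=q_j^*(\bar D)K(\cdot,w)|_{w=w_0}$, substitutes $LK(\cdot,w)=\overline{\phi(w)}\wt K(\cdot,w)$, Taylor-expands $\phi$ around $w_0$, and then kills every higher-order term $\tfrac{\partial^\alpha q_j^*}{\partial z^\alpha}(\bar D)\wt K(\cdot,w)|_{w_0}$ ($|\alpha|>0$) by testing against $p\in\mathcal I$ (dense in $\wt{\mathcal M}$) and using $\tfrac{\partial^\alpha q_j}{\partial z^\alpha}\in\mathbb V_{w_0}(\mathcal I)$; this leaves only the constant term $\overline{\phi(w_0)}\,q_j^*(\bar D)\wt K(\cdot,w)|_{w_0}$, which is matched with $\wt K^{(j)}(\cdot,w_0)=LK^{(j)}(\cdot,w_0)/\overline{\phi(w_0)}$ chosen in Theorem \ref{proj}. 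Your route avoids $L$ and $\phi$ entirely: you feed the same generators $g_1,\ldots,g_t$ of the common stalk into the decomposition formula of the Remark, extract the scalars $\lambda_{jl}=q_j^*(\bar D)\overline{g_l(w)}|_{w_0}$ (which depend only on $q_j$, $g_l$ and hence are identical for both kernels), and use linear independence of $\{K^{(l)}(\cdot,w_0)\}$ together with the hypothesis to force $\lambda_{jl}=\delta_{jl}$; substituting back into the $\wt K$ decomposition gives the result in one line. Your argument is shorter and isolates the structural fact $q_j(D)g_l|_{w_0}=\delta_{jl}$, a duality between the canonical polynomials and the stalk generators that the paper's computation leaves implicit. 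Two small points to make explicit if you flesh this out: (i) the Remark's formula requires $q_j\in\tilde{\mathbb V}_{w_0}(\mathcal I)$, which follows from the hypothesis and Claim~2 in the proof of Lemma \ref{nice} since $K^{(j)}(\cdot,w_0)$ is in the joint kernel; and (ii) the equality of stalks $\mathcal S^{\mathcal M}_{w_0}=\mathcal S^{\wt{\mathcal M}}_{w_0}$ (allowing the same $g_l$ in both decompositions) is established in Section~\ref{res} via $L$ and Hartogs extension, so in the setting of Theorem \ref{proj} this is available, but your stronger assertion that both stalks coincide with $\mathcal I\cdot\mathcal O_{w_0}$ is an extra claim that should be sourced from \cite{bmp} rather than treated as obvious.
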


\begin{proof}
For $f\in\mathcal M$ and $1\leq i\leq m$, we have $\langle f,\bar\partial_iL
K(\cdot,w)\rangle=\partial_i\langle f,L
K(\cdot,w)\rangle= \partial_i\langle L^*f,
K(\cdot,w)\rangle=\langle L^*f,\bar\partial_i
K(\cdot,w)\rangle = \langle f,L\bar\partial_i
K(\cdot,w)\rangle$, that is, $\bar\partial_iL
K(\cdot,w)=L\bar\partial_i K(\cdot,w)$. Thus
$$
p(\bar D)LK(\cdot,w) =  Lp(\bar D)K(\cdot,w) \mbox{~for~ any~} p\in\C[\underline z].
$$
From equation \eqref{gf}, it follows that
\beqa
LK^{(j)}(\cdot,w_0) &=& L\{q^*_j(\bar D)K(\cdot,w)|_{w=w_0} \}= \{Lq^*_j(\bar D)K(\cdot,w)\}|_{w=w_0}\\ &=& \{q^*_j(\bar D) LK(\cdot,w)\}|_{w=w_0}~=~ \{q^*_j(\bar D) \ov{\phi(w)}\wt K(\cdot,w)\}|_{w=w_0}\\ &=& [\sum_{\alpha}\bar a_{\alpha}\{q^*_j(\bar D) (\bar w - \bar w_0)^{\alpha}\wt K(\cdot,w)\}]|_{w=w_0}\\ &=& \sum_{\alpha}\bar a_{\alpha} \frac{\partial^\alpha q^*_j}{\partial z^{\alpha}}(\bar D)\wt K(\cdot,w)|_{w=w_0},
\eeqa
where $\phi(w) = \sum_{\alpha}a_{\alpha}(w - w_0)^{\alpha}$, the power series expansion of $\phi$ around $w_0$. Now for any $p\in\mathcal I$ we have
\beqa
\langle p, \frac{\partial^\alpha q^*_j}{\partial z^{\alpha}}(\bar D)\wt K(\cdot,w)|_{w=w_0}\rangle &=& \langle p, \frac{\partial^\alpha q^*_j}{\partial z^{\alpha}}(\bar D)\wt K(\cdot,w)\rangle|_{w=w_0}\\ &=& \frac{\partial^\alpha q_j}{\partial z^{\alpha}}(D)p(w)|_{w=w_0}.
\eeqa
Since Lemma \ref{nice} ensures that $\{[q_1],\ldots,[q_t]\}$ is a basis for $\tilde{\mathbb V}_{w_0}(\mathcal I)/\mathbb V_{w_0}(\mathcal I)$, it follows that
$$
\langle p, \frac{\partial^\alpha q^*_j}{\partial z^{\alpha}}(\bar D)\wt K(\cdot,w)|_{w=w_0}\rangle =0 \mbox{~for~ all~} p\in\mathcal I \mbox{~and~} \alpha>0.
$$
Therefore, we have $\frac{\partial^\alpha q^*_j}{\partial z^{\alpha}}(\bar D)\wt K(\cdot,w)|_{w=w_0} = 0$ for $\alpha>0$. Hence $LK^{(j)}(\cdot,w_0) = \bar a_0q^*_j(\bar D)\wt K(\cdot,w)|_{w=w_0} = \ov{\phi(w_0)}q^*_j(\bar D)\wt K(\cdot,w)|_{w=w_0}$and consequently $\wt K^{(j)}(\cdot,w)|_{w=w_0}= q^*_j(\bar D)\wt K(\cdot,w)|_{w=w_0},\, 1\leq j\leq t$. \end{proof}

\begin{rem}\label{m-1}
Let $\mathcal M$ Be a Hilbert module in $\mathfrak B_1(\Omega)$. Assume that $\mathcal M = [\mathcal I]_{\mathcal M}$ for some polynomial ideal $\mathcal I$ and the dimension of the zero set of $\mathcal M$ is $m-1$. Let the polynomials $p_1,\ldots,p_t$ be a minimal set of generators for $\mathcal M$. Let $q =$ g.c.d$\{p_1,\ldots,p_t\}$. Then the Beurling form (cf. \cite{cg}) of $\mathcal I$ is $q\mathcal J$, where $\mathcal J$ is generated by $\{p_1/q,\ldots, p_t/q\}$. From \cite[Corollary 3.1.12]{cg}, dim $V(\mathcal J)\leq m-2$ unless $\mathcal J=\C[\underline z]$. The reproducing kernels $K$ of $\mathcal M$ is of the form $K(z,w) = q(z)\chi(z,w)\ov{q(w)}$. Let $\mathcal M_1$ be the Hilbert module determined by the non-negative definite kernel $\chi$. The Hilbert module $\mathcal M$ is equivalent to $\mathcal M_1$. Now $\mathcal M_1 = [\mathcal J]$ and $Z(\mathcal M_1) = V(\mathcal  J)$. If $V(\mathcal  J) =\phi$, then the modules $\mathcal M_1$ belongs to Cowen-Douglas class of rank $1$. Otherwise, dim $V(\mathcal  J)\leq m-2$ and Theorem \ref{bu} determines its equivalence class.
\end{rem}

\section{Examples}

We illustrate, by means of some examples, the nature of the
invariants we obtain from the line bundle $\mathcal L_0$ that lives on the
projective space. From Theorem \ref{proj}, it follows that the
curvature of the line bundle $\mathcal L_0$ is an invariant for the submodule. An example was given in \cite{dmv} showing that the
curvature is is not a complete invariant. However the following lemma is useful for obtaining complete invariant in a large class of examples. 

\begin{lem} \label{16} Let $\mathcal H$ and $\wt{\mathcal H}$
are Hilbert modules in $\mathfrak B_1(\Omega)$ for some bounded domain $\Omega$ in $\C^m$. Suppose that $\mathcal H$ and $\wt{\mathcal H}$ are such that they are in the Cowen-Douglas class $B_1(\Omega\setminus X)$ where $\dim X\leq m-2$. Let $\mathcal M$ and $\wt{\mathcal M}$ be any submodules of $\mathcal H$ and $\wt{\mathcal H}$ respectively, such that
\begin{enumerate} 
\item[(i)]$\mathbb V_w(\mathcal M) = \mathbb V_w(\wt{\mathcal M})$ for all $w\in\Omega$ and
\item[(ii)] $\mathcal M = \cap_{w\in\Omega} \mathcal M^e_w$ and $\wt{\mathcal M} = \cap_{w\in\Omega} {\wt{\mathcal M}}^e_w$, where as before $\mathcal M^e_w := \{f\in\mathcal H: q(D)f|_{w}=0 \mbox{~for~all~} q\in\mathbb V_w(\mathcal M)\}$.
\end{enumerate}
 If $\mathcal H$ and $\wt{\mathcal H}$ are equivalent, then $\mathcal M$ and $\wt{\mathcal M}$ are equivalent.
\end{lem}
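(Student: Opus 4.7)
The plan is to show that the given unitary module isomorphism $L:\mathcal H\to\wt{\mathcal H}$ restricts to a unitary isomorphism $L|_{\mathcal M}:\mathcal M\to\wt{\mathcal M}$; since $L$ intertwines the coordinate multipliers, the problem reduces to proving the set equality $L\mathcal M=\wt{\mathcal M}$. The first step is to invoke the Cowen--Douglas structure together with \cite[Lemma 1.3]{bmp}, \cite[Theorem 3.7]{cs}, and Hartog's theorem (permissible because $\dim X\leq m-2$), exactly as in Section~\ref{res}, to realize $L$ as pointwise multiplication by a non-vanishing holomorphic function $\psi$ on all of $\Omega$ (namely $\psi=1/\phi$, where $\phi$ implements $L^*$).

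The central observation I would establish is that, because $\mathcal M$ is a submodule, $\mathbb V_w(\mathcal M)$ is automatically closed under each partial differentiation $\partial/\partial z_i$. Indeed, for $q\in\mathbb V_w(\mathcal M)$ and $f\in\mathcal M$, the element $(z_i-w_i)f$ again lies in $\mathcal M$, so the Leibniz-type identity \eqref{gf} with $\alpha=e_i$ forces $0 = q(D)\bigl((z_i-w_i)f\bigr)\big|_w = \tfrac{\partial q}{\partial z_i}(D)f\big|_w$, whence $\partial q/\partial z_i\in\mathbb V_w(\mathcal M)$. Iterating, $\partial^{\alpha}q\in\mathbb V_w(\mathcal M)$ for every multi-index $\alpha$.

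With this in hand, one verifies $L\mathcal M\subseteq\wt{\mathcal M}$ as follows. Fix $f\in\mathcal M$; by hypothesis (ii) applied to $\wt{\mathcal M}$, it is enough to show $\psi f\in\wt{\mathcal M}^e_w$ for every $w\in\Omega$, i.e.\ $q(D)(\psi f)|_w=0$ for each $q\in\mathbb V_w(\wt{\mathcal M})=\mathbb V_w(\mathcal M)$, where the identification uses hypothesis (i). Expanding $\psi(z)=\sum_{\alpha}a_\alpha(z-w)^{\alpha}$ near $w$ and applying \eqref{gf} termwise, only the ``top'' term at $z=w$ survives in each summand, yielding $q(D)(\psi f)|_w = \sum_{\alpha}a_\alpha\,\tfrac{\partial^{\alpha}q}{\partial z^{\alpha}}(D)f\big|_w$ (a finite sum, since $q$ is a polynomial). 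The previous paragraph places each $\partial^{\alpha}q$ into $\mathbb V_w(\mathcal M)$, while $f\in\mathcal M\subseteq\mathcal M^e_w$, so every summand vanishes. Intersecting over $w$ and invoking (ii) once more gives $L\mathcal M\subseteq\wt{\mathcal M}$; running the same argument with $L^{-1}$, which is multiplication by $\phi=1/\psi$, supplies the reverse inclusion and completes the identification $L\mathcal M=\wt{\mathcal M}$.

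The only nontrivial input, and therefore the main obstacle to anticipate, is the closure of $\mathbb V_w(\mathcal M)$ under partial differentiation; once that is in place, the rest is a clean Leibniz expansion combined with the envelope hypothesis (ii). No convergence issues arise in the Taylor expansion of $\psi$, because for any fixed polynomial $q$ of degree $d$ only the finitely many multi-indices $\alpha$ with $|\alpha|\leq d$ contribute to the sum defining $q(D)(\psi f)|_w$.
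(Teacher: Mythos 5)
Your proof is correct and follows essentially the same route as the paper's: realize the unitary $L$ as multiplication by a non-vanishing holomorphic $\psi$ on all of $\Omega$ (via Hartog's theorem), expand $\psi$ around $w_0$, apply the Leibniz identity \eqref{gf}, and use hypotheses (i) and (ii) together with the closure of $\mathbb V_{w_0}(\mathcal M)$ under partial differentiation to conclude $L\mathcal M \subseteq \wt{\mathcal M}$, then repeat with $L^{-1}$. The one place you go beyond the paper is that you supply the short Leibniz argument (using the submodule property: $(z_i-w_i)f\in\mathcal M$) showing $\partial q/\partial z_i\in\mathbb V_{w_0}(\mathcal M)$, whereas the paper simply asserts that $\partial^k q/\partial z^k\in\mathbb V_{w_0}(\mathcal M)$; this is a welcome addition rather than a different method.
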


\begin{proof} Suppose $U:\mathcal H\ra\wt{\mathcal H}$ is a unitary module map. Then $U$ is is a multiplication operator induced  by some holomorphic function, say $\psi$, on $\Omega\setminus X$ (cf. \cite{cs}). This function $\psi$ extends non-vanishingly to all of $\Omega$ by Hartog's Theorem. Let $w_0\in\Omega$ and $q\in\mathbb V_{w_0}(\mathcal M) = \mathbb V_{w_0}(\wt{\mathcal M})$.
Also let $\psi(w)= \sum_{\alpha}a_{\alpha}(w - w_0)^{\alpha}$ be the power series expansion around $w_0$. For $f\in\mathcal M$, we have
\beqa
q(D)(Uf)|_{w=w_0} &=& q(D)(\psi f)|_{w=w_0} ~=~ q(D)\{\sum_{\alpha}a_{\alpha}(w - w_0)^{\alpha} f\}|_{w=w_0}\\ &=& \sum_{\alpha}a_{\alpha}q(D)\{(w - w_0)^{\alpha} f\}]|_{w=w_0} \\ &=&\{\sum_{k\leq\alpha} \binom{\alpha}{k} (w -w_0)^{\alpha-k}\frac{\partial^kq}{\partial z^k}(D)(f)\}_{w=w_0}\\ &=& 0
\eeqa 
since $\frac{\partial^kq}{\partial z^k}\in\mathbb V_{w_0}(\mathcal M)$ for any multi index $k$ whenever $q\in\mathbb V_{w_0}(\mathcal M)$. Therefore  it follows that $Uf\in\wt{\mathcal M}$. A similar arguments shows that $U^*\wt{\mathcal M}\subseteq \mathcal M$. The result follows from unitarity of $U$.
\end{proof}

\subsection{\sf  The $(\alpha, \beta, \theta)$ examples: Weighted Bergman Modules in the unit ball}
Let $\mathbb B^2 = \{z = (z_1,z_2)\in\C^2: |z_1|^2+|z_2|^2<1\}$ be the unit ball in $\C^2$. Let $L^2_{\alpha,\beta,\theta}(\mathbb B^2)$ be the Hilbert space of all (equivalence classes of) Borel measurable functions on $\mathbb B^2$ satisfying 
$$
\parallel f\parallel^2_{\alpha,\beta,\theta} = \int_{\mathbb B^2} |f(z)|^2d\mu(z_1,z_2)<+\infty,
$$
where the measure is 
$$
d\mu(z_1,z_2) = (\alpha+\beta+\theta+2)|z_2|^{2\theta}(1 - |z_1|^2 - |z_2|^2)^{\alpha}(1 - |z_2|^2)^{\beta}dA(z_1,z_2)
$$
for $(z_1,z_2)\in\mathbb B^2$, $-1<\alpha,\beta,\theta<+\infty$ and $dA(z_1,z_2) = dA(z_1)dA(z_2)$. Here $dA$ denote the normalized area measure in the plane, that is $dA(z) = \frac{1}{\pi} dxdy$ for $z=x+iy$. The weighted Bergman space $\mathcal A^2_{\alpha,\beta,\theta}(\mathbb B^2)$ is the subspace of $L^2_{\alpha,\beta,\theta}(\mathbb B^2)$ consisting of the holomorphic functions on $\mathbb B^2$. The Hilbert space $\mathcal A^2_{\alpha,\beta,\theta}(\mathbb B^2)$ is non-trivial if we assume that the parameters $\alpha,\beta,\theta$ satisfy the additional condition:
$$
\alpha+\beta+\theta +2>0.
$$
The reproducing kernel $K_{\alpha,\beta,\theta}$ of $\mathcal A^2_{\alpha,\beta,\theta}(\mathbb B^2)$ is given by
\beqa
K_{\alpha,\beta,\theta}(z,w) &=& \frac{1}{\alpha+\beta+\theta +2}\frac{1}{(1- z_1\bar w_1)^{\alpha+\beta+\theta+3}}\\ &&\times\bigg\{\sum_{k=0}^{+\infty}\frac{(\alpha+\beta+\theta+k+2)(\alpha+\theta+2)_k}{(\theta+1)_k}\bigg(\frac{z_2\bar w_2}{1- z_1\bar w_1}\bigg)^k\bigg\}, 
\eeqa
where $z=(z_1,z_2), w=(w_1,w_2)\in\mathbb B^2$ and $(a)_k= a(a+1)\ldots(a+k-1)$ is the Pochhammer symbol. This kernel differs from the kernel $P_{\alpha,\beta,\theta}$ given in \cite{hss} only by a multiplicative constant. The reader may consult \cite{hss} for a detailed discussion of these Hilbert modules. 
 
Let $\mathcal I_P$ be an ideal in $\C[z_1,z_2]$ such that $V(\mathcal I_P) = \{P\}\subset\mathbb B^2$. We have
\beqa
\dim \ker D_{(M - w)^*}=\left\{
\begin{array}{ll}
1 & \hbox{for $w\in{{\mathbb B}^2\setminus\{P\}}$;} \\
\dim \mathcal I_P/\mathfrak m_P\mathcal I_P\,(\,>1\,) & \hbox{for $w=P$.}
\end{array}
\right.
\eeqa
Hence $[\mathcal I_P]_{{\mathcal A}^2_{\alpha,\beta,\theta}(\mathbb B^2)}$ (the completion of $\mathcal I_P$ in ${{\mathcal A}^2_{\alpha,\beta,\theta}(\mathbb B^2)}$) is not equivalent to $[\mathcal I_{P'}]_{{\mathcal A}^2_{\alpha',\beta',\theta'}(\mathbb B^2)}$ (the completion of $\mathcal I_P'$ in ${{\mathcal A}^2_{\alpha',\beta',\theta'}(\mathbb B^2)}$) if $P\neq P'$. Now let us determine
when two modules in the set
$$
\{[\mathcal I_P]_{{\mathcal A}^2_{\alpha,\beta,\theta}(\mathbb B^2)}: -1<\alpha,\beta,\theta<+\infty \mbox{~and~}\alpha+\beta+\theta +2>0\}.
$$ 
are equivalent. In the following
proposition, without loss of generality, we have assumed $P=0$. 

\begin{prop}
Suppose $\mathcal I$ is an ideal in $\C[z_1,z_2]$ with $V(\mathcal I) = \{0\}$. Then the Hilbert modules $[\mathcal I]_{{\mathcal A}^2_{\alpha,\beta,\theta}(\mathbb B^2)}$ and $[\mathcal I]_{{\mathcal A}^2_{\alpha',\beta',\theta'}(\mathbb B^2)}$ are unitarily equivalent if and only
if $\alpha=\alpha', \beta = \beta'$ and $\theta=\theta'$.
\end{prop}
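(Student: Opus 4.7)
The ``if'' direction is trivial; the content is the converse. Suppose $L:\mathcal M\to\widetilde{\mathcal M}$ is a unitary module map, where $\mathcal M=[\mathcal I]_{\mathcal A^2_{\alpha,\beta,\theta}(\mathbb B^2)}$ and $\widetilde{\mathcal M}=[\mathcal I]_{\mathcal A^2_{\alpha',\beta',\theta'}(\mathbb B^2)}$. Since $V(\mathcal I)=\{0\}$ has codimension $2$ in $\mathbb B^2$, both modules belong to $\mathfrak B_1(\mathbb B^2)$ and lie in $B_1(\mathbb B^2\setminus\{0\})$. By the argument preceding Theorem \ref{bu}, $L$ is implemented by multiplication with a non-vanishing holomorphic function $\phi$ on $\mathbb B^2$, and
\[
K^{[\mathcal I]}_{\alpha,\beta,\theta}(w,w)=|\phi(w)|^2\,K^{[\mathcal I]}_{\alpha',\beta',\theta'}(w,w),\qquad w\in\mathbb B^2\setminus\{0\}.
\]
Because $\log|\phi|^2$ is pluriharmonic, the Cowen--Douglas curvatures $\mathcal K_{\alpha,\beta,\theta}:=-\partial\bar\partial\log K^{[\mathcal I]}_{\alpha,\beta,\theta}(w,w)$ and $\mathcal K_{\alpha',\beta',\theta'}$ agree on $\mathbb B^2\setminus\{0\}$; this is the invariant driving the rigidity argument.

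The Nullstellensatz gives $\mathfrak m_0^N\subseteq\mathcal I$, so $[\mathcal I]^\perp$ in each $\mathcal A^2_{\alpha,\beta,\theta}(\mathbb B^2)$ is finite-dimensional and consists of polynomials of degree less than $N$. Accordingly
\[
K^{[\mathcal I]}_{\alpha,\beta,\theta}(w,w)=K_{\alpha,\beta,\theta}(w,w)-R_{\alpha,\beta,\theta}(w,w),
\]
with $R$ a bounded polynomial in $(w,\bar w)$. Since $K_{\alpha,\beta,\theta}(w,w)$ blows up on approach to $\partial\mathbb B^2$ (away from $0$) while $R$ stays bounded, the term $\log(1-R/K)$ and its $\partial\bar\partial$ contribute only strictly subleading terms in that asymptotic regime; hence the leading asymptotics of $\mathcal K_{\alpha,\beta,\theta}$ coincide with those of $-\partial\bar\partial\log K_{\alpha,\beta,\theta}(w,w)$.

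I then extract three independent invariants from two boundary-directed slices. On the slice $(w_1,0)$ the inner sum in the closed form for $K_{\alpha,\beta,\theta}$ collapses to its $k=0$ term, giving $K_{\alpha,\beta,\theta}((w_1,0),(w_1,0))=(1-|w_1|^2)^{-(\alpha+\beta+\theta+3)}$, whence
\[
\partial_{w_1}\bar\partial_{w_1}\log K_{\alpha,\beta,\theta}\big|_{(r,0)}=\frac{\alpha+\beta+\theta+3}{(1-r^2)^2},\qquad \partial_{w_2}\bar\partial_{w_2}\log K_{\alpha,\beta,\theta}\big|_{(r,0)}=\frac{(\alpha+\beta+\theta+3)(\alpha+\theta+2)}{(\alpha+\beta+\theta+2)(\theta+1)(1-r^2)}.
\]
On the slice $(0,w_2)$, Pochhammer/Stirling asymptotics give $K_{\alpha,\beta,\theta}((0,w_2),(0,w_2))\sim C(1-|w_2|^2)^{-(\alpha+3)}$ as $|w_2|\to 1$, so $\partial_{w_2}\bar\partial_{w_2}\log K_{\alpha,\beta,\theta}\big|_{(0,r)}\sim(\alpha+3)/(1-r^2)^2$. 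Matching the leading coefficients across the two parameter triples yields, in order, $\alpha+\beta+\theta=\alpha'+\beta'+\theta'$, then $\alpha=\alpha'$, and finally $(\alpha+\theta+2)/(\theta+1)=(\alpha'+\theta'+2)/(\theta'+1)$. Since $\theta\mapsto 1+(\alpha+1)/(\theta+1)$ is strictly monotone for $\alpha>-1$, this forces $\theta=\theta'$, and consequently $\beta=\beta'$.

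The principal technical point is a precise justification that the finite-rank polynomial correction $R_{\alpha,\beta,\theta}$ shifts each of the displayed quantities only by terms of strictly smaller order in $(1-r^2)^{-1}$ as $r\to 1^-$; this is a routine estimate because $R$ and its derivatives are bounded on $\mathbb B^2$, while $K_{\alpha,\beta,\theta}$ and its derivatives blow up at the precise rates supplied by the explicit formulas above, so the induced corrections to $\partial\bar\partial\log$ decay strictly faster than the leading terms extracted.
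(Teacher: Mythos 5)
Your argument is correct, and it takes a genuinely different route from the paper's. The paper first passes to the auxiliary ideals $\mathcal I_{m,n}=\langle z_1^m,z_2^n\rangle$ with $m,n\geq N$: combining equation \eqref{obs1} with Lemma \ref{16} shows that $[\mathcal I_{m,n}]_{\mathcal A^2_{\alpha,\beta,\theta}}\cong[\mathcal I_{m,n}]_{\mathcal A^2_{\alpha',\beta',\theta'}}$ whenever $[\mathcal I]_{\mathcal A^2_{\alpha,\beta,\theta}}\cong[\mathcal I]_{\mathcal A^2_{\alpha',\beta',\theta'}}$. It then invokes Theorem \ref{proj} and computes the curvature of the line bundle $\mathcal L_0$ on the exceptional fiber $p^{-1}(0)\cong\mathbb P^1$, which isolates the ratios $a_{m,n}=b_{0n}/b_{m0}$ as invariants; running $(m,n)$ over $(N,N),(N,N+1),(N,N+2),(N+1,N)$ and using the explicit formula \eqref{coeff} then recovers $\alpha,\beta,\theta$. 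You instead work directly with the ordinary Cowen--Douglas curvature of $[\mathcal I]$ on $\mathbb B^2\setminus\{0\}$ --- invariant because the intertwiner is multiplication by a non-vanishing holomorphic $\phi$ (Hartogs extension, as in the discussion before Theorem \ref{bu}) and $\log|\phi|^2$ is pluriharmonic --- and read off the three parameters from boundary asymptotics along the two coordinate slices. Your route is more elementary in that it bypasses both the reduction to $[\mathcal I_{m,n}]$ and the monoidal-transform invariant altogether, and the estimate you flag is indeed routine: along $(r,0)$ one has $\partial_{w_2}K=\bar\partial_{w_2}K=0$, so $\partial_{w_2}\bar\partial_{w_2}(R/K)$ reduces to $\partial\bar\partial R/K - R\,\partial\bar\partial K/K^2 = O\big((1-r^2)^{\alpha+\beta+\theta+2}\big)$, which vanishes, while the leading term is $\sim(1-r^2)^{-1}$; the slice $(0,r)$ is similarly benign since $\alpha>-1$. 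What the paper's approach buys is exactly the point of Section \ref{res}: the invariant is extracted at the singular point of $V(\mathcal I)$ rather than at $\partial\mathbb B^2$, so the computation illustrates $\mathcal L_0(\mathcal M)$ and would survive on domains or for kernels whose boundary behavior is not explicitly tractable, whereas your argument leans on the very explicit boundary blow-up of $K_{\alpha,\beta,\theta}$.
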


\begin{proof} From the Hilbert Nullstellensatz, it follows that there exist an natural number $N$ such that $\mathfrak m_0^N\subset \mathcal I$. Let $\mathcal I_{m,n}$ be the polynomial ideal generated by $z_1^m$ and $z_2^n$. Combining \eqref{obs1} with Lemma \ref{16} we see, in particular, that the submodules $[\mathcal I_{m,n}]_{{\mathcal A}^2_{\alpha,\beta,\theta}(\mathbb B^2)}$  and $[\mathcal I_{m,n}]_ {{\mathcal A}^2_{\alpha',\beta',\theta'}(\mathbb B^2)}$  are unitarily equivalent for $m,n\geq N$. Let $K_{m,n}$  be the reproducing kernel for $[\mathcal I_{m,n}]_{{\mathcal A}^2_{\alpha,\beta,\theta}(\mathbb B^2)}$.  We write $K_{\alpha,\beta,\theta}(z,w) = \sum_{i,j\geq0}b_{ij}z_1^iz_2^j$ where
\beq\label{coeff}
b_{ij}= \frac{\alpha+\beta+\theta+j+2}{\alpha+\beta+\theta+2}\cdot\frac{(\alpha+\theta+2)_j}{(\theta+1)_j}\cdot\frac{(\alpha+\beta+\theta+j+3)_i}{i!}.
\eeq
Let $I_{m,n}:=\{(i,j)\in\mathbb Z\times\mathbb Z: i,j\geq 0, i\geq m\mbox{~or~}j\geq n\}$. We note that
$$K_{m,n}(z,w)= \d_{(i,j)\in I_{m,n}}b_{ij}z_1^iz_2^j\bar w_1^i\bar w_2^j.$$
One easily see that the set $\{z_1^m, z_2^n\}$ forms a minimal set of generators for the sheaf corresponding to $[\mathcal I_{m,n}]_{{\mathcal A}^2_{\alpha,\beta,\theta}(\mathbb B^2)}$. The reproducing kernel then can be decomposed as 
$$
K_{m,n}(z,w) = \bar w_1^mK_1^{m,n}(z,w)+\bar w_2^nK_2^{m,n}(z,w) \mbox{~for~ some~} r>0\mbox{~and~} w\in\Delta(0;r).
$$ 
Successive differentiation, using Leibnitz rule, gives
\beqa
K_1^{m,n}(z,w)|_{w=0} &=& \frac{1}{m!}\bar\partial_1^mK_{m,n}(\cdot,w)\}|_{w=(0,0)} = b_{m0}z_1^m\mbox{~and~}\\ K_2^{m,n}(z,w)|_{w=0} &=& \frac{1}{n!}\bar\partial_2^nK_{m,n}(\cdot,w)\}|_{w=(0,0)} = b_{0n}z_2^n.
\eeqa
Therefore
\begin{eqnarray*}
s_1(\theta_1) &=& b_{m0}z_1^m+\theta_1 b_{0n}z_2^n,
\end{eqnarray*}
where $\theta_1$ denotes co-ordinate for the corresponding open chart in $\mathbb P^1$. Thus
$$
{\parallel s_1(\theta_1)\parallel}^2~=~b_{m0}^2 \parallel z_1^m\parallel^2+b_{0n}^2 \parallel z_2^n\parallel^2 |\theta_1|^2 =~b_{m0}+b_{0n} |\theta_1|^{2}.
$$ 
Let
$a_{m,n}=b_{0n}/b_{m0}$. Let $\mathcal K_{m,n}$ denote the curvature corresponding
to the bundle $\mathcal L_{0,m,n}$ which is determined on the projective space $\mathbb P^1$ by the module $[\mathcal I_{m,n}]_{{\mathcal A}^2_{\alpha,\beta,\theta}(\mathbb B^2)}$. Thus we have
\begin{eqnarray*} 
{\mathcal K}_{m,n}(\theta_1) &=&\partial_{\theta_1}\partial_{\bar\theta_1} {\rm
ln}{\parallel s_1(\theta_1)\parallel}^2 ~=~
\partial_{\theta_1}\partial_{\bar\theta_1} {\rm
ln}(1+a_{m,n}|\theta_1|^2)\\&=&\partial_{\theta_1}\frac{a_{m,n}\theta_1}{1+a_{m,n}|\theta_1|^2}
= \frac{a_{m,n}}{(1+a_{m,n}|\theta_1|^2)^2}.
\end{eqnarray*} 
Let $\mathcal K_{m,n}'$ denote the curvature corresponding
to the bundle $\mathcal L_{0,m,n}'$ which is determined on the projective space $\mathbb P^1$ by the module $[\mathcal I_{m,n}]_{{\mathcal A}^2_{\alpha',\beta',\theta'}(\mathbb B^2)}$. As above we have
$$
{\mathcal K}_{m,n}'(\theta_1)~=~\frac{a_{m,n}'}{(1+a_{m,n}'|\theta_1|^2)^2}.
$$
Since the submodules $[\mathcal I_{m,n}]_{{\mathcal A}^2_{\alpha,\beta,\theta}(\mathbb B^2)}$  and $[\mathcal I_{m,n}]_ {{\mathcal A}^2_{\alpha',\beta',\theta'}(\mathbb B^2)}$  are unitarily equivalent, from Theorem \ref{proj}, it follows that $\mathcal K_{m,n}(\theta_1)={\mathcal K}_{m,n}'(\theta_1)$ for  $\theta_1$ in an open chart $\mathbb P^1$ and $m,n\geq N$. Thus
$$
\frac{a_{m,n}}{(1+a_{m,n}|\theta_1|^2)^2}~=~\frac{a_{m,n}'}{(1+a_{m,n}'|\theta_1|^2)^2}.
$$ This shows that $(a_{m,n} - a_{m,n}')(1+a_{m,n}a_{m,n}'|\theta_1|^2) = 0$. So $a_{m,n} = a_{m,n}'$ and hence 
\beq\label{p2}
\frac{b_{0n}}{b_{m0}} = \frac{b_{0n}'}{b_{m0}'}
\eeq
for all $m,n\geq N$. This also follows from the equation \eqref{sec}. It is enough to consider the cases $(m,n) = (N,N), (N,N+1), (N,N+2)$ and $ (N+1, N)$ to prove the Proposition.  From equation \eqref{p2}, we have 
\begin{eqnarray}\label{p3}
\frac{b_{(N+1)0}}{b_{N0}}=\frac{b_{(N+1)0}'}{b_{N0}'},~ \frac{b_{0(N+1)}}{b_{0N}}=\frac{b_{0(N+1)}'}{b_{0N}'} \mbox{~and~} \frac{b_{0(N+2)}}{b_{0(N+1)}}=\frac{b_{0(N+2)}'}{b_{0(N+1)}'}.
\end{eqnarray} Let $A=\alpha+\beta+\theta, B=\alpha+\theta$ and $C=\theta$. From equation \eqref{coeff}, we have 
$$
\frac{b_{(N+1)0}}{b_{N0}} = \frac{A+N+3}{N+1},~   \frac{b_{0(N+1)}}{b_{0N}}~=~\frac{A+N+3}{A+N+2}\cdot\frac{B+N+2}{C+N+1}
$$ 
and
$$
\frac{b_{0(N+2)}}{b_{0(N+1)}}~=~\frac{A+N+4}{A+N+3}\cdot\frac{B+N+3}{C+N+2}.
$$ 
From \eqref{p3}, it follows that $A=A'$ and 
\beq\label{p4}
BC'+B(N+1)+ C'(N+2) ~=~B'C+B'(N+1)+ C(N+2) ,
\eeq
\beq\label{p5}
BC'+B(N+2)+ C'(N+3) ~=~B'C+B'(N+2)+ C(N+3) .
\eeq
Subtracting
\eqref{p5} from \eqref{p4}, we get
$B - C= B'- C'$ and thus $\theta = \theta'$. Therefore $\frac{b_{0(N+1)}}{b_{0N}}=\frac{b_{0(N+1)}'}{b_{0N}'}$ implying  $B=B'$ and hence $\alpha = \alpha'$.
Lastly $A=A'$ and in consequence $\beta =\beta'$.\end{proof}

\subsection*{\sf Acknowledgement} We thank R. G. Douglas and M. Putinar for several very useful suggestions and many hours of fruitful discussion relating to this work.


\begin{thebibliography}{99}


\bibitem{as}
O.~P.~Agrawal and N.~Salinas, {\em Sharp kernels and canonical
subspaces (revised)}, Amer. J. Math. \textbf{110} (1988), no. 1,
23 -- 47.


\bibitem{bmp}
S.~Biswas, G.~Misra and M.~Putinar. \emph{Unitary invariants for Hilbert modules of finite rank}, preprint, arXiv:0909.1902.


\bibitem{cg}
X.~Chen and K.~Guo, \emph{Analytic Hilbert modules}, Chapman and
Hall/CRC Research Notes in Mathematics, \textbf{433}.


\bibitem{cd}
M.~J.~Cowen and R.~G.~Douglas, {\em Complex geometry and
Operator theory}, Acta Math. \textbf{141} (1978), 187 -- 261.

\bibitem{cd1}
\bysame, \emph{On operators possessing an open set of eigenvalues},
Memorial
Conf. for F\'{e}jer-Riesz, Colloq. Math. Soc. J. Bolyai, 1980, pp.~323 --
341.


\bibitem{cd2}
\bysame, \emph{On moduli for invariant subspaces},  Invariant subspaces and other topics (Timisoara/Herculane, 1981), Oper. Theory Adv. Appl. \textbf{6}, Birkhauser, Basel-Boston, Mass., 1982 65--73



\bibitem{cs}
R.~E.~Curto and N.~Salinas, {\em Generalized Bergman kernels and
the Cowen-Douglas theory}, Amer. J. Math. \textbf{106} (1984), 447
-- 488.




\bibitem{dmv}
R.~G.~Douglas, G.~Misra and C.~Varughese, {\em Some geometric
invariants from resolutions of Hilbert modules}, Systems,
approximation, singular integral operators, and related topics
(Bordeaux,2000) Oper. Theory Adv. Appl. \textbf{129}, Birkhauser,
Basel, 2001, 241--270.

\bibitem{dp}
R.~G.~Douglas and V.~I.~Paulsen, {\em Hilbert modules over function
algebra}, Longman Research Notes, 217, 1989.

\bibitem{dpsy}
R.~G.~Douglas, V.~I.~Paulsen, C.~H.~Shah and K.~Yan, {\em Algebraic reduction and rigidity for Hilbert modules}, Amer. J. Math. \textbf{117} (1995), no. 1,
75--92.

\bibitem{dg}
Y.~Duan and K.~Guo, \emph{Dimension formula for localization of
Hilbert modules},  J. Operator Theory \textbf{62} (2009),  no. 2, 439--452.



\bibitem{fg}
K.~Fritzsche and H.~Grauert, \emph{{From holomorphic function to complex manifolds}}, Graduate Texts in Mathematics, 213. Springer-Verlag, New York, 2002.






\bibitem{hss}
H.~Hedenmalm, S.~Shimorin and A.~Sola, \emph{Norm expansion along zero variety} J. Funct. Anal.  \textbf{254} (2008),  no. 6, 1601--1625. 




\bibitem{loj}
S.~Lojasiewicz, \emph{Introduction to Complex Analytic Geometry} Birkhauser, Verlag, 1991.

\bibitem{tay}
J.~L.~Taylor, \emph{Several complex variables with connections to algebraic geometry and Lie groups}, Graduate Studies in Mathematics, 46. American Mathematical Society, 2002.




\end{thebibliography}
\end{document}